\theoremstyle{definition}
\newtheorem{theorem}{Theorem}[section]
\newtheorem{lemma}[theorem]{Lemma}
\newtheorem{proposition}[theorem]{Proposition}
\newtheorem{corollary}[theorem]{Corollary}
\newtheorem{example}[theorem]{Example}
\theoremstyle{remark}
\newtheorem{remark}[theorem]{Remark}
\numberwithin{equation}{section}
\newcommand{\M}{\mathcal{M}}
\newcommand{\N}{\mathcal{N}}
\newcommand{\U}{\mathcal{U}}
\newcommand{\R}{\mathbb{R}}
\renewcommand{\H}{\mathbb{H}}
\renewcommand{\S}{\mathbb{S}}
\begin{document}

\title[Condition Metrics in the Three Classical Spaces]{Condition Metrics in the Three Classical Spaces}

\author[J. G. Criado del Rey]{Juan G. Criado del Rey}
\address{Dpto. de Matem\'aticas, Estad\'istica y Computaci\'on. Facultad de Ciencias. Universidad de Cantabria. Spain.}
\email{juan.gonzalezcr@alumnos.unican.es, jgcriadodelrey@gmail.com}

\subjclass[2010]{Primary 53C23}

\date{\today}

%    "Communicated by" -- provide editor's name; required.
%\commby{}

\begin{abstract}
Let $(\M,g)$ be a Riemannian manifold and $\N$ a $\mathcal{C}^2$ submanifold without boundary. If we multiply the metric $g$ by the inverse of the squared distance to $\N$, we obtain a new metric structure on $\M\setminus\N$ called the \emph{condition metric}. A question about the behaviour of the geodesics in this new metric arises from the works of Shub and Beltr\'an: is it true that for every geodesic segment in the condition metric its closest point to $\N$ is one of its endpoints? Previous works show that the answer to this question is positive (under some smoothness hypotheses) when $\M$ is the Euclidean space $\mathbb{R}^n$. Here we prove that the answer is also positive for $\M$ being the sphere $\S^n$ and we give a counterexample showing that this property does not hold when $\M$ is the hyperbolic space $\H^n$.
\end{abstract}

\maketitle

\section{Introduction}

In this paper we study the following problem: let $(\M,g)$ be a Riemannian manifold and $\N$ a $\mathcal{C}^2$ submanifold without boundary. We consider a new metric structure $g_\kappa$ on $\M\setminus\N$ obtained by multiplying the metric $g$ by the inverse of the squared distance to $\N$. This is, for a point $x \in \M\setminus\N$,
$$g_{x,\kappa} = d(x,\N)^{-2}g_x,$$

\noindent
where $d(x,\N)$ is the Riemannian distance (w.r.t. $g$) from $x$ to $\N$. We call $g_\kappa$ the \emph{condition metric} on $\M\setminus\N$. The interest of the condition metric comes from the papers of Shub \cite{bezout_vi} and Beltr\'an-Shub \cite{bezout_vii}, where they improve complexity bounds for solving systems of polynomial equations in terms of a certain condition metric on the space $\M$ of systems, with $\N$ being the set of ill-conditioned systems to avoid. Although $(\M\setminus\N,g_\kappa)$ is not always a Riemannian manifold, there is still a sensible way to define the concept of geodesic as a path that locally minimizes the distance. Geodesics in the condition metric try to avoid the submanifold $\N$ because being close to $\N$ increases their length. An interesting question about these geodesics is the following: given a geodesic segment in the condition metric, is it true that the closest point from the segment to $\N$ is one of its endpoints? Sometimes we will refer to this property as `the worst is at the endpoints'.

The function $d(\cdot,\N)$ is not always smooth, but it can be shown that it is always Lipschitz (\cite[Proposition 9]{beltran09}). In this context the condition metric defines a Lipschitz-Riemann structure (in the sense of \cite[Definition 2]{beltran09}) and we have to consider Lipschitz curves on $\M\setminus\N$. For such a curve $\gamma:I\rightarrow\M\setminus\N$ the Rademacher Theorem states that the tangent vector $\dot{\gamma}$ exists almost everywhere, so it makes sense to define the arc length of $\gamma$ w.r.t. $g_\kappa$ by
$$L_\kappa(\gamma) = \int_I \|\dot{\gamma}(t)\|_\kappa dt = \int_I \|\dot{\gamma}(t)\|d(\gamma(t),\N)^{-1}dt.$$

With this definition of arc length, we say that a path $\gamma:[a,b] \rightarrow \mathcal{M}\setminus\mathcal{N}$, parametrized by arc length, is a \emph{minimizing geodesic} in the condition metric if $L_\kappa(\gamma) \leq L_\kappa(c)$ for any Lipschitz curve $c:[a,b] \rightarrow \mathcal{M}\setminus\mathcal{N}$ with $\gamma(a) = c(a)$ and $\gamma(b) = c(b)$. We say that $\gamma$ is a \emph{geodesic} if it is locally a minimizing geodesic.

A sufficient condition for a geodesic $\gamma$ in the condition metric to satisfy that `the worst is at the endpoints' is that the function
\begin{equation}\label{normal_conv}
t \mapsto \frac{1}{d(\gamma(t),\mathcal{N})}
\end{equation}

\noindent
is convex (recall that a function $f:(a,b) \rightarrow \mathbb{R}$ is \emph{convex} if for every $x,y\in(a,b)$ and for every $t \in [0,1]$, $f((1-t)x+ty) \leq (1-t)f(x)+tf(y)$). If we examinate some examples in detail, we rapidly realize that a stronger property is satisfied in many cases: the logarithm of the function \eqref{normal_conv} is also a convex function (this means that \eqref{normal_conv} is a \emph{log-convex} function). We wonder if this is true in general. More precisely, is the real function
\begin{equation}\label{log_conv}
t \mapsto \log\frac{1}{d(\gamma(t),\N)}
\end{equation}
convex for every geodesic $\gamma$ in the condition metric? Answering this question is the main goal of our work and our results about it are summarized in theorems \ref{sn} and \ref{hn}. If \eqref{log_conv} is a convex function for every geodesic $\gamma$ in $g_\kappa$, we will say that the \emph{self-convexity property} is satisfied (maybe the term \emph{self-log-convexity} would be more accurate, but we prefer to use this shorter term). If the distance function $d(\cdot,\N)$ is smooth, then the self-convexity property is equivalent to
\begin{equation}\label{log_conv_equiv}
\frac{d^2}{dt^2}\log\frac{1}{d(\gamma(t),\N)} \geq 0 \quad \equiv \quad \frac{d^2}{dt^2}\log d(\gamma(t),\N) \leq 0,
\end{equation}

\noindent
but if it is not, deciding whether \eqref{log_conv} is a convex function or not is much harder a problem. In many cases we will restrict ourselves to the largest open set $\U \subseteq \M\setminus\N$ such that for every $x \in \U$ the function $d(\cdot,\N)$ is smooth and there is a unique closest point to $x$ in $\N$. If \eqref{log_conv} is a convex function for every geodesic contained in $\U$, we will say that the \emph{smooth self-convexity property} is satisfied. The following result solves the problem for the case $\M = \R^n$:

\begin{theorem}\cite[Theorem 2]{beltran09}\label{rn} The smooth self-convexity property is satisfied for the Euclidean space $\M = \mathbb{R}^n$ endowed with the usual inner product $\langle\cdot,\cdot\rangle$, and $\N$ a complete $\mathcal{C}^2$ submanifold without boundary.\end{theorem}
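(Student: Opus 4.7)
The plan is to reduce the smooth self-convexity statement to a pointwise inequality on the Hessian of the smooth distance function $f(x):=d(x,\N)$, and then verify that inequality from the geometry of the Euclidean nearest-point map. By \eqref{log_conv_equiv}, what must be shown is $\ddot\phi(t)\le 0$ for every geodesic $\gamma\subset\U$ of $g_\kappa$, where $\phi(t):=\log f(\gamma(t))$. On $\U$ the conformal metric $g_\kappa=f^{-2}\langle\cdot,\cdot\rangle$ is a genuine smooth Riemannian structure, so the classical ODE geodesic equation applies.

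The first step is to write this equation explicitly via the standard formula for the Christoffel symbols of a conformal change. Parametrizing $\gamma$ by $g_\kappa$-arclength yields
$$\ddot\gamma=\frac{2\langle\nabla f,\dot\gamma\rangle}{f}\,\dot\gamma-\frac{\|\dot\gamma\|^2}{f}\,\nabla f.$$
I would then differentiate $\phi$ twice, substitute this identity for $\ddot\gamma$, and invoke the eikonal equation $\|\nabla f\|\equiv 1$ on $\U$ together with its differential consequence $\nabla^2 f\cdot\nabla f=0$. Decomposing $\dot\gamma=a\nabla f+v^\perp$ with $v^\perp\perp\nabla f$, I expect the cross terms to cancel and the computation to collapse to
$$f^2\,\ddot\phi=f\,\langle\nabla^2 f\cdot v^\perp,v^\perp\rangle-\|v^\perp\|^2,$$
so the theorem reduces to the pointwise estimate $f\,\nabla^2 f\le\mathrm{Id}$, interpreted as an inequality of quadratic forms on the hyperplane $(\nabla f)^\perp$, at every point of $\U$.

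This last inequality I would prove via the explicit identity $\nabla f(x)=(x-p(x))/f(x)$, where $p(x)\in\N$ is the unique foot of $x$. Differentiating this equality once gives
$$f(x)\,\nabla^2 f(x)=\mathrm{Id}-Dp(x)-\nabla f(x)\otimes\nabla f(x),$$
so on $(\nabla f(x))^\perp$ the claim $f\,\nabla^2 f\le\mathrm{Id}$ is equivalent to $\langle Dp(x)\,v^\perp,v^\perp\rangle\ge 0$. Writing $p=p(x)$ and $\nu=\nabla f(x)\in N_p\N$, a local parametrization of a neighbourhood of $x$ by (foot point, parallel normal) shows that $Dp(x)$ annihilates $N_p\N$ and acts on $T_p\N$ as $(I-f(x)A_\nu)^{-1}$, where $A_\nu$ denotes the shape operator of $\N$ at $p$ in direction $\nu$. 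Hence $Dp(x)\succeq 0$ follows from $I-fA_\nu\succ 0$, which is precisely the second-order optimality condition for $p$ as the nearest-point minimum on $\N$ and is equivalent to $x$ lying strictly before the first focal point of $\N$ along $\nu$, i.e.\ to $x\in\U$. The main technical obstacle I anticipate is the codimension-free computation of $Dp(x)$ in this step, where one must handle the normal bundle and the normal connection cleanly; once that standard tubular-neighbourhood computation is carried out, the self-convexity inequality assembles immediately.
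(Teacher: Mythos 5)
Your proposal is correct in substance, but note that this paper never proves Theorem \ref{rn} at all: it is quoted from \cite[Theorem 2]{beltran09}, and the closest in-paper analogue of a proof is the machinery used for the sphere, namely Proposition \ref{desig} together with Lemma \ref{lemita}. Your reduction coincides with that machinery: your identity $f^2\ddot\phi=f\,\nabla^2f(v^\perp,v^\perp)-\|v^\perp\|^2$ is exactly (up to sign) the quantity in Proposition \ref{desig} once one knows $\|D\rho_x\|=1$, and your formula $f\,\nabla^2f=\mathrm{Id}-Dp-\nabla f\otimes\nabla f$ shows the theorem is equivalent to $\langle Dp(x)v,v\rangle\ge0$ for $v\perp\nabla f$, which is precisely the Euclidean counterpart of Lemma \ref{lemita} for the foot map $K$. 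Where you genuinely diverge is in how you prove this positivity: you compute $Dp$ explicitly through the normal exponential map, obtaining $Dp|_{N_p\N}=0$ and $Dp|_{T_p\N}=(I-fA_\nu)^{-1}$, and then invoke second-order optimality. This buys extra information (the full structure of $D^2\rho$ in terms of the shape operator, and the focal-point description of $\U$), but it is also the one place needing care: you cannot simply declare that the foot of $q(s)+f\nu(s)$ is $q(s)$ for a curve $q(s)$ in $\N$, since membership in $\U$ gives no uniform tube; the clean fix is to differentiate the identity $E\circ(K,\mathrm{id}-K)=\mathrm{id}$ on $\U$, where $E(q,w)=q+w$, which simultaneously yields your block formula for $Dp$ and the invertibility of $I-fA_\nu$ that you need in order to upgrade the second-order optimality condition $I-fA_\nu\succeq0$ to positivity of $(I-fA_\nu)^{-1}$ (and then $\langle Dp\,v,v\rangle=\langle(I-fA_\nu)^{-1}v^T,v^T\rangle\ge0$, the normal component of $v$ being annihilated and orthogonal to $T_p\N$). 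The route taken in \cite{beltran09} and mirrored in this paper is softer: expand the minimality inequality $\|\tilde x-K(\tilde x)\|\le\|\tilde x-K(x)\|$ to first order, as in Lemma \ref{lemita}, which gives $\langle DK_x\dot x,\dot x\rangle\ge0$ in a few lines with no shape operator, no normal connection, and no invertibility discussion — at the price of giving no quantitative picture of the Hessian of the distance.
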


Our first result is

\begin{theorem}\label{sn} The smooth self-convexity property is satisfied for the sphere $\M = \mathbb{S}^n$ and $\N$ a complete $\mathcal{C}^2$ submanifold without boundary.\end{theorem}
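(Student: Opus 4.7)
My plan is to follow the general strategy used for the Euclidean case (Theorem \ref{rn}): write down the geodesic equation for the condition metric $g_\kappa = f^{-2}g_{\S^n}$, where $f = d(\cdot,\N)$, substitute it into the second derivative of $\log f(\gamma(t))$, and reduce the self-convexity inequality to a pointwise estimate on $\mathrm{Hess}\,f$. The new ingredient compared to $\R^n$ is that the ambient sectional curvature $+1$ must be absorbed in the Hessian bound.

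Using the standard conformal-change formula with $\phi = -\log f$, the equation satisfied by a geodesic $\gamma \subset \U$ of $g_\kappa$, parametrized by $g_\kappa$-arclength, takes the form
\begin{equation*}
\nabla_{\dot\gamma}\dot\gamma \;=\; \frac{2\langle\nabla f,\dot\gamma\rangle}{f}\dot\gamma \;-\; f\,\nabla f,
\end{equation*}
where $\nabla$ is the Levi-Civita connection of $g_{\S^n}$ and the arclength normalization reads $\|\dot\gamma\|^2 = f^2$. A direct computation using the eikonal equation $\|\nabla f\|\equiv 1$ on $\U$ (which forces $\mathrm{Hess}\,f(\nabla f,\cdot)=0$) then yields
\begin{equation*}
\frac{d^2}{dt^2}\log f(\gamma(t)) \;=\; \frac{\mathrm{Hess}\,f(\dot\gamma,\dot\gamma)}{f} + \frac{\langle\nabla f,\dot\gamma\rangle^2}{f^2} - 1.
\end{equation*}
Decomposing $\dot\gamma = \alpha\nabla f + w$ with $w \perp \nabla f$ and $\alpha^2 + \|w\|^2 = f^2$, the self-convexity $\frac{d^2}{dt^2}\log f\le 0$ becomes equivalent to the pointwise inequality
\begin{equation*}
\mathrm{Hess}\,f(w,w) \;\le\; \frac{\|w\|^2}{f}\qquad\text{for every } w \perp \nabla f.
\end{equation*}

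To prove this I would use Jacobi-field / Riccati analysis along the unit-speed minimizing geodesic $\sigma$ from the closest point $p \in \N$ to $x = \sigma(r)$. The restriction $S := \mathrm{Hess}\,f|_{(\nabla f)^\perp}$ satisfies the matrix Riccati equation $S' + S^2 + \mathcal{R} = 0$ along $\sigma$, with tidal operator $\mathcal{R}(v) = R(v,\sigma')\sigma'$. On $\S^n$ one has $\mathcal{R} = I$ on $(\sigma')^\perp$, so $S' = -S^2 - I$. In a parallel orthonormal frame perpendicular to $\sigma'$, diagonality of $S$ is preserved (since both $S^2$ and $I$ are diagonal), and each eigenvalue evolves by the scalar ODE $\lambda' = -\lambda^2 - 1$; hence it has the form $\lambda(r) = \cot(r+\alpha)$ with $\alpha \in [0,\pi]$ determined by the initial data at $p$. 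Specifically, $\alpha = 0$ for Jacobi fields vanishing at $p$ (directions normal to $\N$ other than $\sigma'(0)$), while $\cot\alpha$ corresponds to a principal curvature of $\N$ in the direction $\sigma'(0)$ for Jacobi fields tangent to $\N$. The hypothesis $x \in \U$ forces $r + \alpha < \pi$ for every eigenvalue, as otherwise $\sigma$ would cross a focal point of $\N$.

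It then suffices to verify the one-variable inequality $\cot(r+\alpha) \le 1/r$ for $r \in (0, \pi-\alpha)$ and $\alpha \in [0,\pi]$, equivalently $F(r) := r\cos(r+\alpha) - \sin(r+\alpha) \le 0$. Since $F'(r) = -r\sin(r+\alpha) \le 0$ on this interval and $F(0) = -\sin\alpha \le 0$, the inequality $F \le 0$ holds throughout; summing over the eigenbasis of $\mathrm{Hess}\,f|_{(\nabla f)^\perp}$ gives the desired Hessian bound. The main obstacle I anticipate is the Jacobi-field bookkeeping: writing the boundary conditions at $p$ so that tangential and normal directions to $\N$ are treated uniformly, and checking that the focal-point condition carving out $\U$ gives exactly $r+\alpha<\pi$ for each eigenvalue. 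Once this setup is secured, the analytic core of the argument reduces to the elementary computation above.
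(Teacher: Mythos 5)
Your argument is correct, but it takes a genuinely different route from the paper. The paper works extrinsically in $\mathbb{R}^{n+1}$: it invokes Proposition \ref{desig} to reduce smooth self-convexity to the nonnegativity of $\|\dot{x}\|^2\|D\rho_x\|^2-(D\rho_x\dot{x})^2-\rho(x)D^2\rho_x(\dot{x},\dot{x})$, computes $D\rho$, $D^2\rho$ and $\|D\rho_x\|=1$ through the chordal formula $\rho=2\arcsin(\|x-K(x)\|/2)$, proves the monotonicity $\langle DK_x\dot{x},\dot{x}\rangle\geq 0$ of the nearest-point map, and thereby bounds the general case below by the case $\N=\{K(x)\}$, which is settled by an explicit Christoffel-symbol computation in spherical coordinates (Proposition \ref{punctured_sn_smooth}). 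You instead proceed intrinsically: the conformal geodesic equation plus the eikonal identity reduce everything to the Hessian bound $\mathrm{Hess}\,\rho(w,w)\le\|w\|^2/\rho$ on $(\nabla\rho)^\perp$, which you obtain from the matrix Riccati equation $S'+S^2+I=0$ (the tube formula in constant curvature), the focal-point constraint $r+\alpha<\pi$ coming from $x\in\U$, and the elementary inequality $\cot(r+\alpha)\le 1/r$; your reduction is equivalent to the paper's criterion once $\|D\rho\|=1$ is known. What your approach buys is conceptual clarity and generality: it isolates the curvature mechanism (Riccati comparison would give the same Hessian bound for any ambient space of nonnegative sectional curvature, recovering Theorem \ref{rn} as well), and it explains at a glance why Theorem \ref{hn} fails in $\H^n$, where $\cot$ becomes $\coth$ and $\coth(r+\alpha)>1/r$. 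What the paper's approach buys is that it is elementary and self-contained, needing no Jacobi-field or focal-point theory. The bookkeeping you flag is standard but should be stated: you need that the whole open normal segment from $K(x)$ to $x$ lies in $\U$ so the Riccati equation holds along it, that the singular initial data at $K(x)$ split block-diagonally into tangential directions (eigenvalues $\cot\alpha_i$ given by the shape operator) and normal directions (behaviour $\cot r$), and that the borderline case $r+\alpha_i=\pi$ is excluded because $\mathrm{Hess}\,\rho$ is finite at $x$; also, purely cosmetically, you use $\alpha$ both for the component of $\dot\gamma$ along $\nabla\rho$ and for the phase in $\cot(r+\alpha)$.
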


Let us now briefly discuss the importance of Theorem \ref{sn} in the context of the question that originated the study of condition metrics. In \cite{bezout_vi,bezout_vii} the authors noted that studying the condition metric in the set
$$\M_{poly} = \{(f,\zeta)\ |\ f\text{ a polynomial system}, \zeta \in \mathbb{P}(\mathbb{C}^{n+1}), f(\zeta) = 0\},$$

\noindent
where polynomial systems are assumed to be homogeneous of fixed degree in $n+1$ complex variables, with
$$\N_{poly} = \{(f,\zeta)\in\M\ |\ \zeta\text{ is a degenerate zero of }f\},$$

\noindent
could be useful for the design of fast homotopy methods to solve polynomial systems (indeed, the metric used in \cite{bezout_vii} is not exactly the condition metric, but it is closely related to it from \cite[Corollary 6]{beltran_sol_var}). The question of self-convexity turned out to be extremely difficult to analyze in this context, which motivated a theoretical and numerical study \cite{beltran09,beltran12,boito_dedieu} of the linear case
$$\M_{lin} = \{(M,\zeta) \in \mathbb{C}^{n\times(n+1)}\times \mathbb{P}(\mathbb{C}^{n+1})\ |\ M\zeta = 0\},$$

\noindent
(we denote by $\mathbb{C}^{n\times(n+1)}$ the set of $n \times (n+1)$ complex matrices) with
$$\N_{lin} = \{(M,\zeta)\in\M\ |\ \dim\ker M > 1\}.$$

Using quite sophisticated an argument, it was proved in \cite{beltran12} that the self-convexity property holds in $(\M_{lin},\N_{lin})$. The argument considers a stratification of the set $\mathbb{C}^{n\times(n+1)}$ of complex matrices based on the singular value descomposition. For each $u$-uple $(k) = (k_1, ..., k_u)$ of integers with $k_1 + \cdots + k_u = n$, consider the set $\mathcal{P}_{(k)}$ of matrices whose $k_1$ first singular values are equal, whose $k_2$ following singular values are equal, etcetera. That is,
$$\mathcal{P}_{(k)} = \{M\in\mathbb{C}^{n\times(n+1)}\ |\ \text{svd}(M) = (\underbrace{\sigma_1, ..., \sigma_1}_{k_1}, \underbrace{\sigma_2, ..., \sigma_2}_{k_2}, ..., \underbrace{\sigma_u, ..., \sigma_u}_{k_u})\},$$

\noindent
with $\sigma_1 > \sigma_2 > \cdots > \sigma_u > 0$. Also let
$$\mathcal{N}_{(k)} = \{M\in\mathbb{C}^{n\times(n+1)}\ |\ \text{svd}(M) = (\underbrace{\sigma_1, ..., \sigma_1}_{k_1}, \underbrace{\sigma_2, ..., \sigma_2}_{k_2}, ..., \underbrace{\sigma_{u-1}, ..., \sigma_{u-1}}_{k_{u-1}}, \underbrace{0, ..., 0}_{k_u})\}.$$

These sets will play the role of $\M$ and $\N$. It can be shown that $\mathcal{P}_{(k)}$ is a smooth manifold \cite[Proposition 16]{beltran12}. Although in this case $\mathcal{N}_{(k)}$ is not contained in $\mathcal{P}_{(k)}$, $\N_{(k)}$ lies in the boundary of $\mathcal{P}_{(k)}$, so the condition metric in $(\mathcal{P}_{(k)},\N_{(k)})$ can be defined. The distance function is smooth in $\mathcal{P}_{(k)}\setminus\mathcal{N}_{(k)}$ and, surprisingly, the smooth self-convexity property (thus the self-convexity property) holds for each pair $(\mathcal{P}_{(k)},\N_{(k)})$. Then the authors glue all the pieces together and lift the result up to $(\M_{lin},\N_{lin})$, thus proving that the smooth self-convexity property is satisfied in the linear case.

The problem about self-convexity in $(\M_{poly},\N_{poly})$ remains open, but in view of the fact that self-convexity holds for such complicated cases as ($\mathcal{P}_{(k)},\N_{(k)})$, $(\M_{lin},\N_{lin})$ and $\mathbb{R}^n$ together with any $\mathcal{C}^2$ submanifold (Theorem \ref{rn}), one could hope for the existence of a general argument proving that the smooth self-convexity property holds for every pair $(\M,\N)$ under very general assumptions, opening the path to a solution for $(\M_{poly},\N_{poly})$. Theorem \ref{sn} adds another collection of cases to this list, with $\M$ being $\mathbb{S}^n$ and $\N$ any $\mathcal{C}^2$ submanifold.

Despite all this (somehow empirical) evidence, our last theorem shows that smooth self-convexity can fail, even in a very familiar space.

\begin{theorem}\label{hn} If the ambient manifold is the hyperbolic space $\M = \mathbb{H}^n$ and $\N$ is a single point, then for every geodesic $\gamma$ in the condition metric the function $t \mapsto \log\left(\frac{1}{d(\gamma(t),\N)}\right)$ is concave. Moreover, if $\gamma'(t)$ does not point towards the point $\N$, then the function is strictly concave at $t$. Thus in this case the self-convexity property is not satisfied.
\end{theorem}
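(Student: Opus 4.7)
The plan is to exploit the full rotational symmetry: since $\mathcal{N}$ is a single point $p$, everything reduces to a one-variable computation in geodesic polar coordinates around $p$. Identify $\mathbb{H}^n\setminus\{p\}$ with $(0,\infty)\times\mathbb{S}^{n-1}$ carrying the hyperbolic metric $g=dr^2+\sinh^2(r)\,g_{\mathbb{S}^{n-1}}$, where $r=d(\cdot,p)$ is smooth. The condition metric then reads
$$g_\kappa = \frac{dr^2}{r^2}+\frac{\sinh^2 r}{r^2}\,g_{\mathbb{S}^{n-1}}.$$
The key step is the change of variable $s=\log r$, which turns this into a smooth warped-product metric on $\mathbb{R}\times\mathbb{S}^{n-1}$:
$$g_\kappa = ds^2 + \phi(s)^2\,g_{\mathbb{S}^{n-1}}, \qquad \phi(s)=\frac{\sinh(e^s)}{e^s}.$$
Hence $(\mathbb{H}^n\setminus\{p\},g_\kappa)$ is isometric to a genuine smooth Riemannian manifold, and its geodesics in the sense of the paper coincide with the classical warped-product geodesics.

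Next I would invoke the standard ODE for the warping coordinate of a warped-product geodesic $\gamma(t)=(s(t),\theta(t))$, which gives
$$\ddot s(t) = \phi(s(t))\,\phi'(s(t))\,\|\dot\theta(t)\|_{\mathbb{S}^{n-1}}^2.$$
Because $s(t)=\log d(\gamma(t),p)$, convexity of $s$ is exactly concavity of $\log(1/d(\gamma(\cdot),p))$, and strict convexity of $s$ at $t$ corresponds to strict concavity of the latter at $t$.

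The remaining task is a sign check. The factor $\phi(s)=\sinh(r)/r$ (with $r=e^s$) is strictly positive. A direct computation yields $\phi'(s)=\cosh(r)-\sinh(r)/r=(r\cosh r-\sinh r)/r$, and the function $f(r)=r\cosh r-\sinh r$ has $f(0)=0$ and $f'(r)=r\sinh r\geq 0$, so $f(r)>0$ for every $r>0$ and thus $\phi'(s)>0$ for every $s\in\mathbb{R}$. Therefore $\ddot s(t)\geq 0$, with strict inequality precisely when $\dot\theta(t)\neq 0$, i.e.\ when $\dot\gamma(t)$ is not purely radial (equivalently, does not point toward or away from $p$). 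This yields the theorem, and it also clarifies the contrast with the Euclidean and spherical cases, where the analogous warping factor satisfies the opposite monotonicity and produces the opposite sign. I do not expect a substantive obstacle: the only real content is recognising $g_\kappa$ as a smooth warped product under $s=\log r$, after which the conclusion is forced by a one-line geodesic equation together with an elementary monotonicity check.
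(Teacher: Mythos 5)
Your proposal is correct, and it takes a genuinely different route from the paper. The paper reduces to the punctured $\H^2$ (viewed as a totally geodesic slice of $\H^n$), works in a disk model with coordinates $(r,\phi)$, computes the Christoffel symbols and geodesic equations of the \emph{original} hyperbolic metric, then the quantities $D\rho$, $D^2\rho$ and $\|D\rho\|$, and feeds them into the sign criterion of Proposition \ref{desig} (from \cite{beltran09}), obtaining the quantity $\dot{\phi}^2 r(r+\log(1-r))/(1-r)^2 \leq 0$, with equality exactly in the radial case. You instead observe that, since the distance $r$ to a point is smooth on all of $\H^n\setminus\{p\}$ (no cut locus), the condition metric is itself a smooth Riemannian metric, and under $s=\log r$ it becomes the warped product $ds^2+\phi(s)^2 g_{\mathbb{S}^{n-1}}$ with $\phi(s)=\sinh(e^s)/e^s$; the radial geodesic equation $\ddot{s}=\phi(s)\phi'(s)\|\dot{\theta}\|^2$ together with $\phi'>0$ (since $r\cosh r-\sinh r>0$ for $r>0$) then gives convexity of $s=\log d(\gamma(t),\N)$, strict unless $\dot{\theta}=0$, which is exactly the statement. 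Your sign check and the warped-product geodesic equation are correct, and your identification of ``geodesic in the sense of the paper'' with classical geodesics is legitimate here precisely because the metric is smooth. What your approach buys: it avoids Proposition \ref{desig} and all Hessian/Christoffel computations, it works in all dimensions at once without the reduction to $n=2$, and it makes transparent the unified picture across the three classical spaces, where the sign of $\phi'$ (with $\phi(s)=\sin(e^s)/e^s$, $1$, $\sinh(e^s)/e^s$ respectively) governs self-convexity versus self-concavity --- indeed your formula specializes on $\S^n$ exactly to the identity \eqref{geod_sn_cond_2} used in Proposition \ref{punctured_sn_smooth}. The paper's route, on the other hand, exercises the general machinery (Proposition \ref{desig}) that is also needed for submanifolds $\N$ of positive dimension, where no such global smooth warped-product structure is available.
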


This result, together with the cases of $\R^n$ (Theorem \ref{rn}) and $\mathbb{S}^n$ (Theorem \ref{sn}), completes the study of the smooth self-convexity property in the three classical spaces.

\section{Some examples}

In this section we will present some examples of condition metrics varying $\M$ and $\N$. From now on, we will denote $d(x,\N)$ simply by $\rho(x)$.

\begin{example} If we take $\M = \mathbb{R}^2$ the Euclidean plane and $\N$ the line $\{y=0\}$, then de distance from a point $(x,y)$ to $\N$ is $\rho(x,y) = y$ and the condition metric reads $g_{(x,y),\kappa} = \frac{1}{y^2}\langle\cdot,\cdot\rangle$. In this case we obtain two copies of the Poincar\'e half space and the function \eqref{log_conv} is convex for every geodesic segment, supporting Theorem \ref{rn}.

\end{example}

\begin{example} Let $\M$ be $\mathbb{R}^2$ as in the previous example and let $\N$ be a single point. For example, let $\N$ be the origin $\N = \{(0,0)\}$ as in Figure \ref{fig:1point}.

\begin{figure}[h]
\begin{subfigure}{.45\textwidth}
  \centering
  \includegraphics[width=1\textwidth]{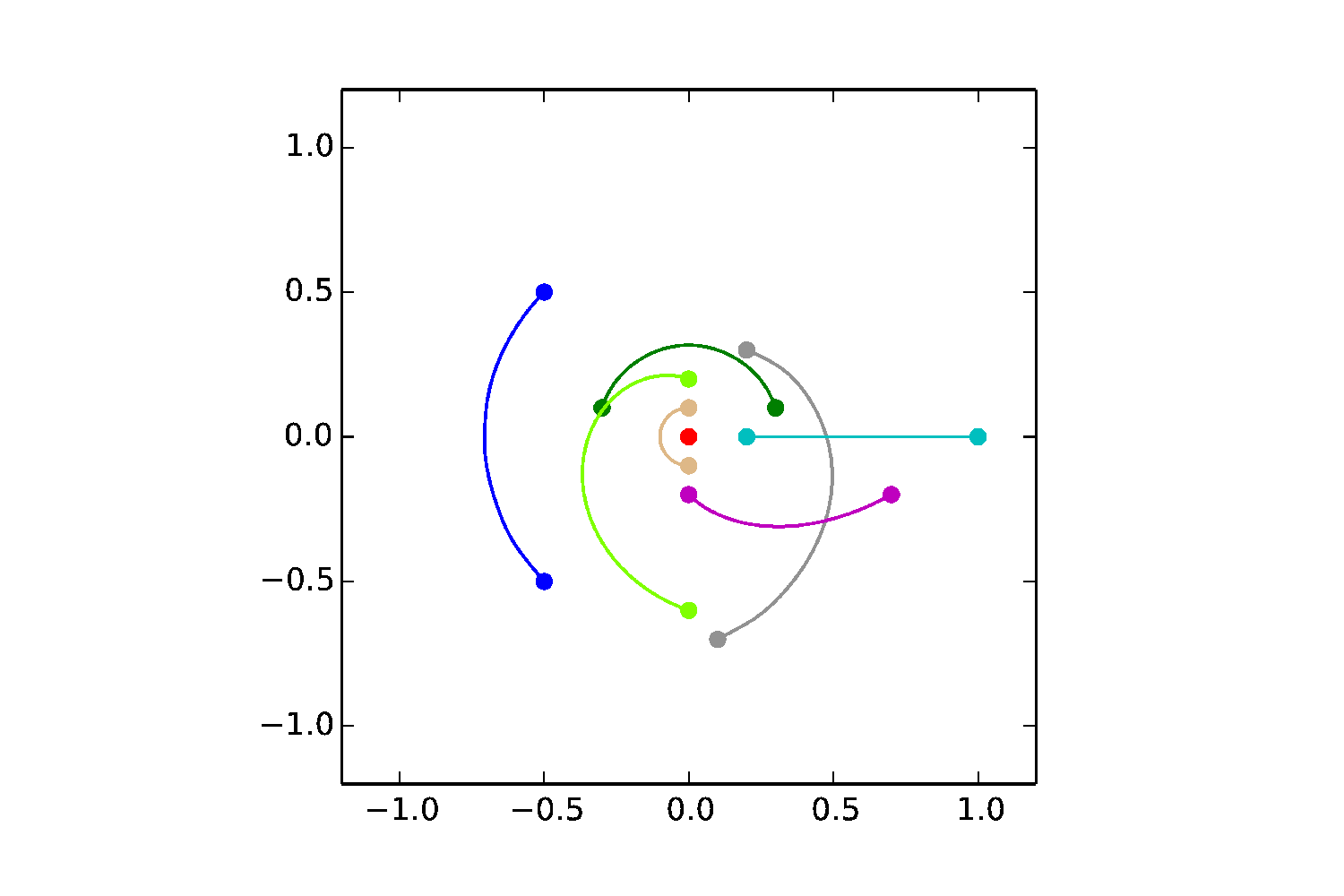}
	\caption{Geodesic segments.}
\end{subfigure}
\begin{subfigure}{.45\textwidth}
  \centering
  \includegraphics[width=1\textwidth]{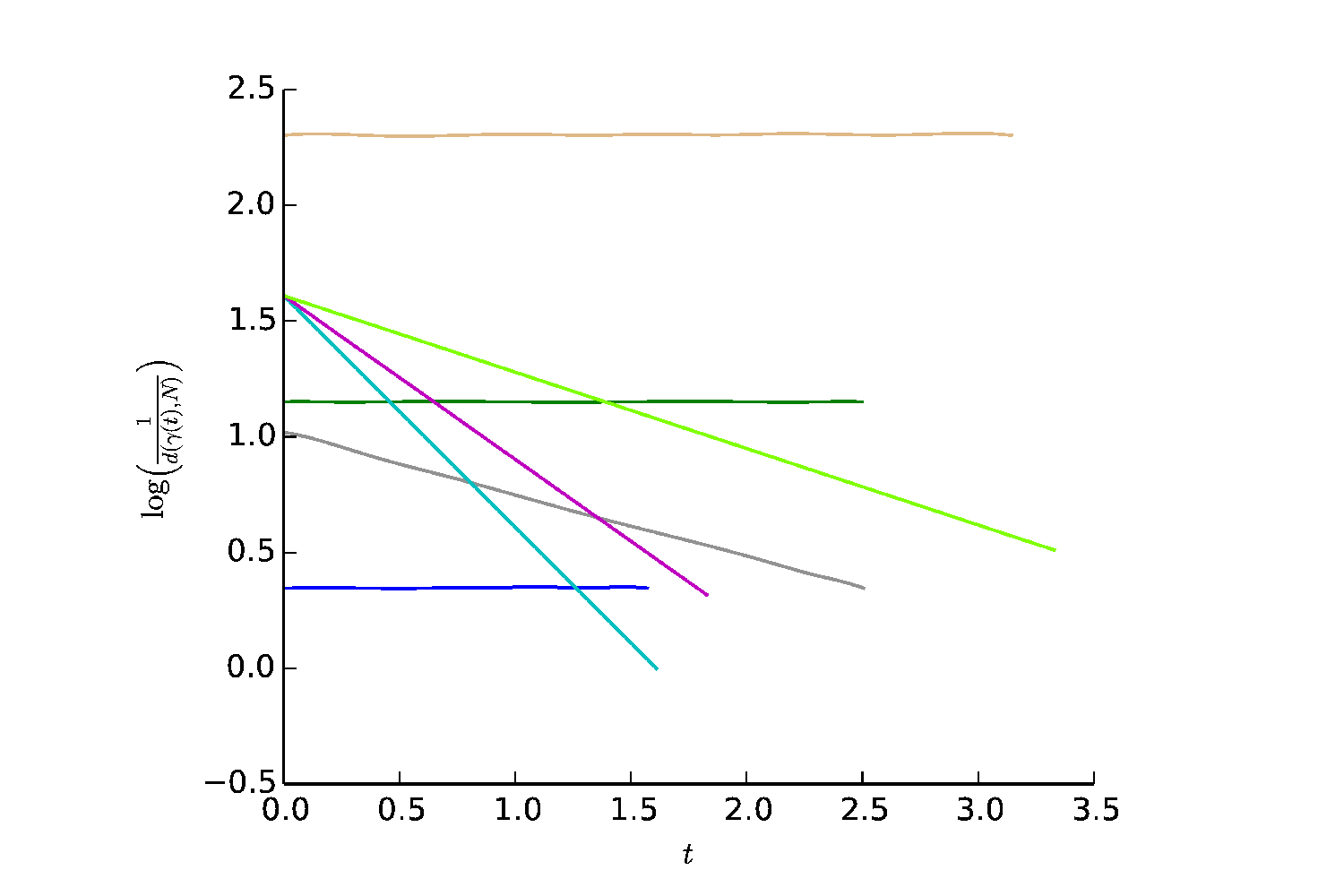}
	\caption{Corresponding \eqref{log_conv} functions.}
\end{subfigure}
\caption{Some geodesic segments in the condition metric when $\M$ is the Euclidean plane $\mathbb{R}^2$ and $\N$ is the red point, $(0,0)$. In this case \eqref{log_conv} functions are affine functions, thus convex.}
\label{fig:1point}
\end{figure}

The condition metric is given by $g_{(x,y),\kappa} = \frac{1}{\|(x,y)\|^2}\langle\cdot,\cdot\rangle$. In this case we are on the hypotheses of Theorem \ref{rn}, so the function \eqref{log_conv} is convex on $\M\setminus\N$. Moreover it can be shown that $(\mathbb{R}^2\setminus\{(0,0)\},g_\kappa)$ is isometric to a cylinder via the isometry $f:\mathbb{R}^2 \rightarrow \mathbb{R}^3$ given by
$$f(x,y) = \left(\frac{x}{\|(x,y)\|},\frac{y}{\|(x,y)\|},\log\|(x,y)\|\right).$$

\end{example}

\begin{example} If we take out two points from the plane, let us say we set $\N = \{(-1,0),(1,0)\}$, then $\rho(x)$ is a piecewise function smooth at every point $(x,y)$ with $x > 0$ or $x < 0$, but it is not smooth on the line $\{x = 0\}$ and for every point in this line there are two closest points to $x$ in $\N$. Theorem \ref{rn} guarantees that \eqref{log_conv} is a convex function for every geodesic segment contained in one of the two semiplanes $\{x>0\}$ or $\{x<0\}$, but it says nothing about those geodesic segments crossing the line $\{x=0\}$. Figure \ref{fig:2points} shows a picture of the situation.

\begin{center}
\begin{figure}[h]
\begin{subfigure}{.45\textwidth}
  \centering
  \includegraphics[width=0.75\textwidth]{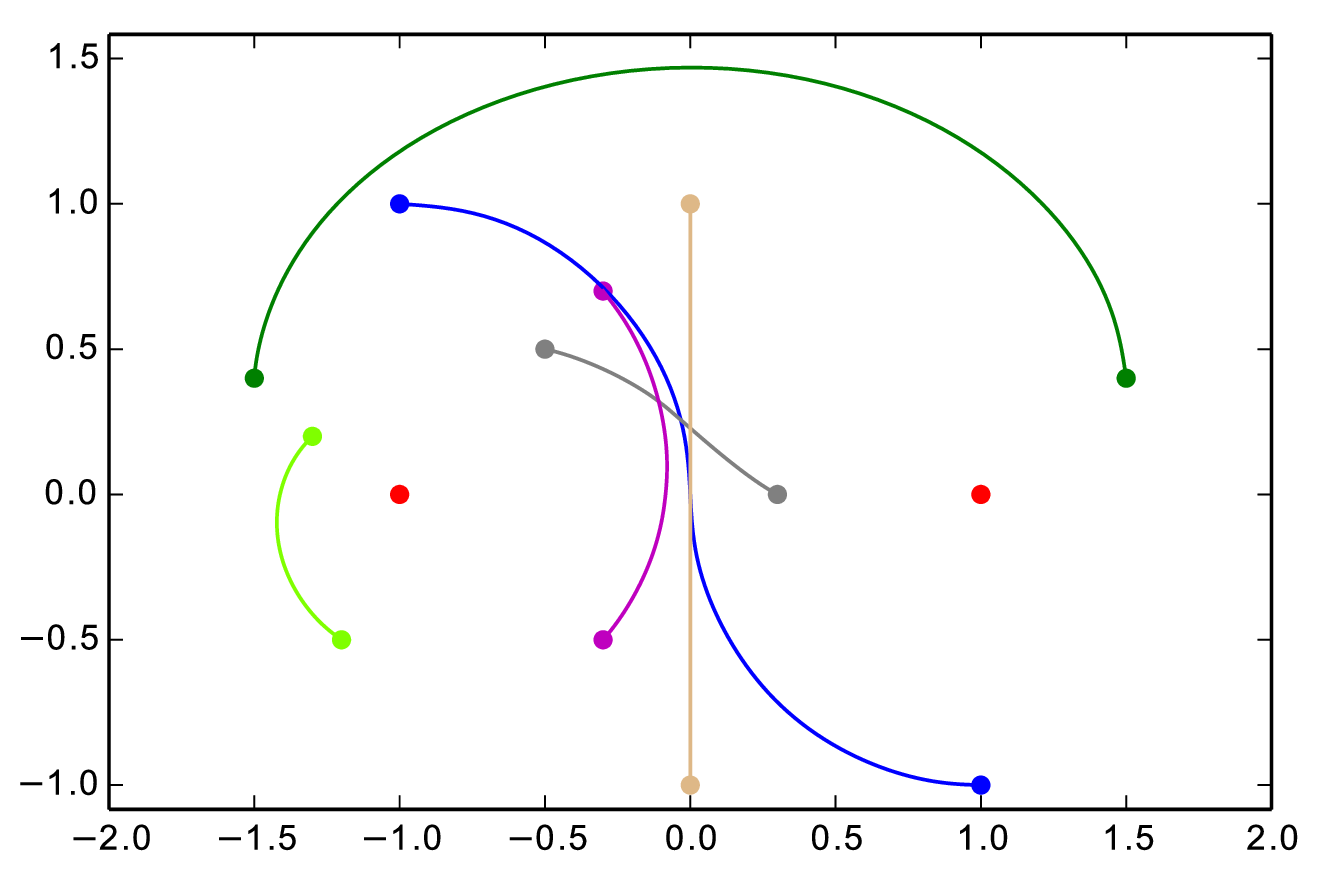}
	\caption{Geodesic segments.}
\end{subfigure}
\begin{subfigure}{.45\textwidth}
  \centering
  \includegraphics[width=1\textwidth]{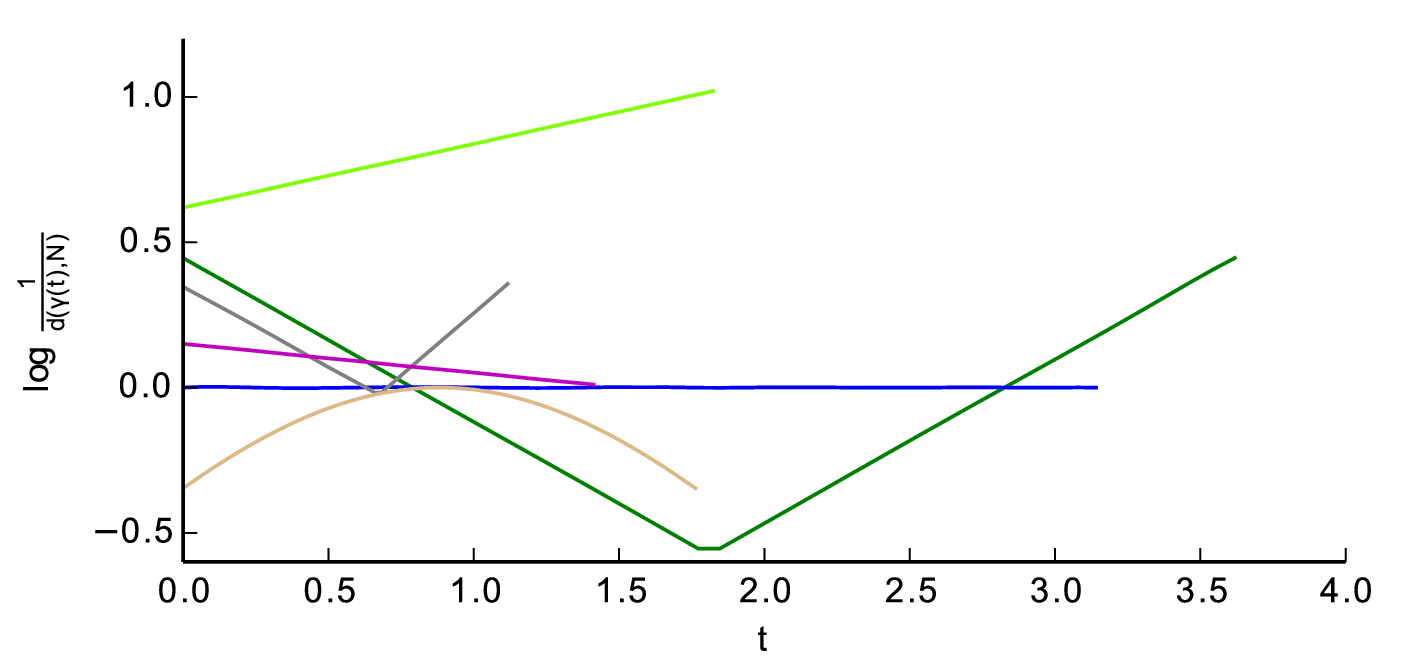}
	\caption{Corresponding \eqref{log_conv} functions.}
\end{subfigure}
\caption{Some geodesic segments in the condition metric when $\M = \R^2$ and $\N$ consists of the two red points.}
\label{fig:2points}
\end{figure}
\end{center}

As we can see, if a geodesic segment which crosses the line $\{x=0\}$ has only one point in this line, then its corresponding \eqref{log_conv} function is convex because both branches of the function are convex and, when crossing the line, the distance function reaches a global maximum, hence \eqref{log_conv} reaches a minimum and is convex (see Lemma \ref{conv_branches}). However, the function \eqref{log_conv} corresponding to the light brown segment, which is entirely contained in the problematic line, is not convex.

The general case for $\N$ being a finite number of points in the plane is determined by the Voronoi diagram of the points. Inside the Voronoi cells \eqref{log_conv} is convex by Theorem \ref{rn}, but we cannot say much about what happens for segments crossing some edges and vertices. \end{example}

\begin{example} If $\M$ is again the plane and $\N$ is a hyperbola, then the situation is very similar to the example above (see Figure \ref{fig:hyperbola}). The function \eqref{log_conv} is convex for every geodesic segment contained in the open set $\U$ where $\rho$ is smooth and there is a single closest point in the hyperbola, but it fails to be convex for the blue segment, which is entirely contained in the $y$ axis: if we have to move from one of the blue dots to the other one, we have to go through the neck of the hyperbola.

\begin{figure}[h]
\begin{subfigure}{.45\textwidth}
  \centering
  \includegraphics[width=0.8\textwidth]{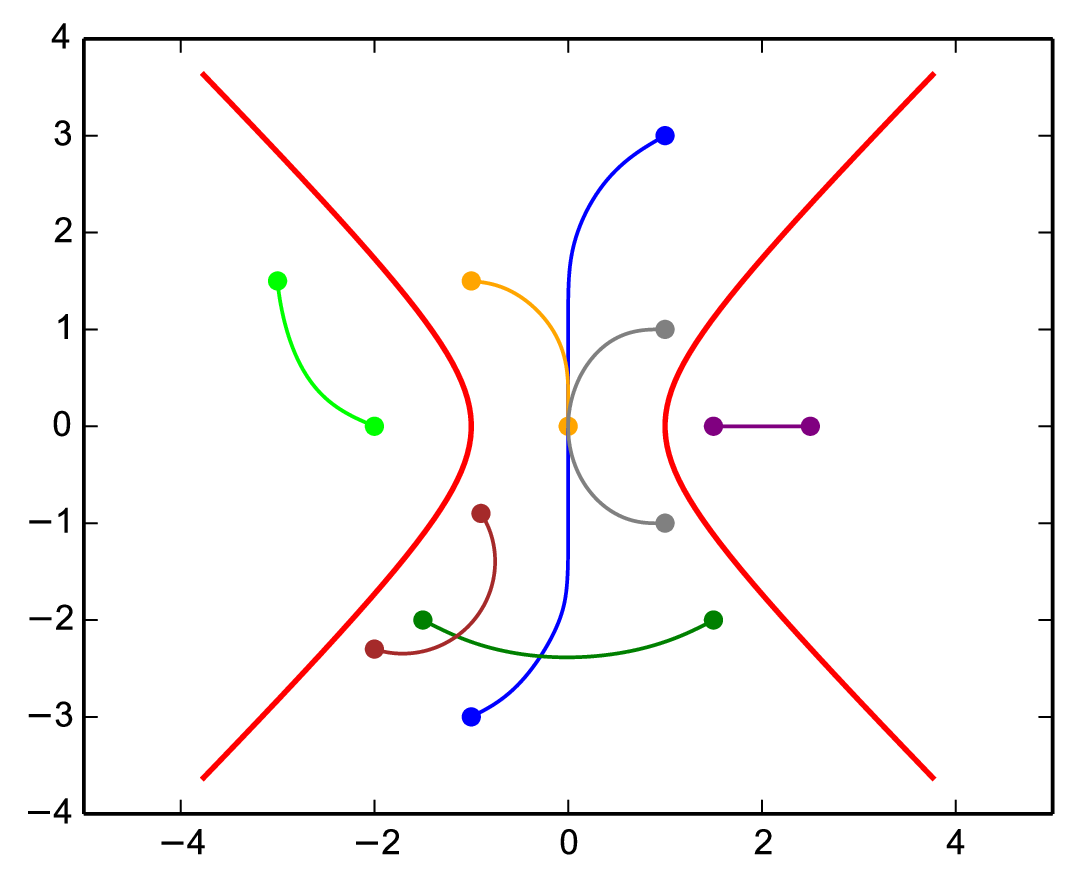}
	\caption{Geodesic segments.}
\end{subfigure}
\begin{subfigure}{.45\textwidth}
  \centering
  \includegraphics[width=1\textwidth]{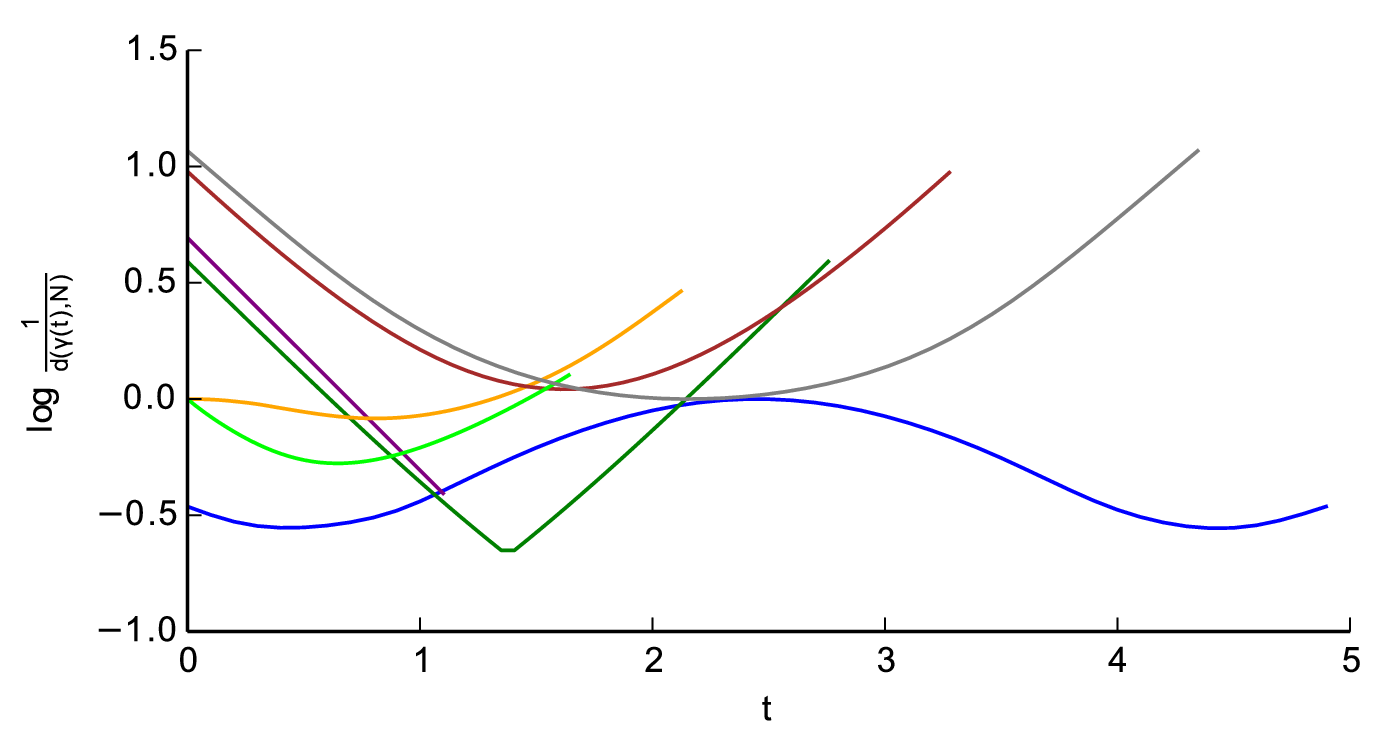}
	\caption{Corresponding \eqref{log_conv} functions.}
\end{subfigure}
\caption{Some geodesic segments in the condition metric when $\M$ is the Euclidean plane $\mathbb{R}^2$ and $\N$ is a hyperbola (in red). Self-convexity fails for the blue segment, which is not contained in $\U$.}
\label{fig:hyperbola}
\end{figure}

\end{example}

\begin{example} Let us move from the Euclidean ambient manifold to the sphere. Let $\M = \S^2$ and $\N$ a single point. For example $\N = \{(0,0,1)\}$ be the north pole $N$, as in Figure \ref{fig:sphere}. In spherical coordinates, the distance from a point $(\theta,\phi)$ to the north pole is simply $\rho(\theta,\phi) = \theta$, hence the local expression for the condition metric in this case is $g_{(\theta,\phi),\kappa} = \frac{1}{\theta^2}g_{(\theta,\phi)}$, where $g$ is the usual metric on the sphere in spherical coordinates. The function $\rho$, defined on $\S^2\setminus\{N\}$, is not smooth at the south pole $S = \{(0,0,-1)\}$, but it is smooth elsewhere, so our main result about self-convexity on the sphere says that \eqref{log_conv} is convex for every geodesic segment contained in $\S^2\setminus\{N,S\}$. However, as a consequence of Lemma \ref{conv_branches}, in this particular case self-convexity also holds at the south pole.
\end{example}

\begin{figure}[h]
\begin{subfigure}{.45\textwidth}
  \centering
  \includegraphics[width=0.6\textwidth]{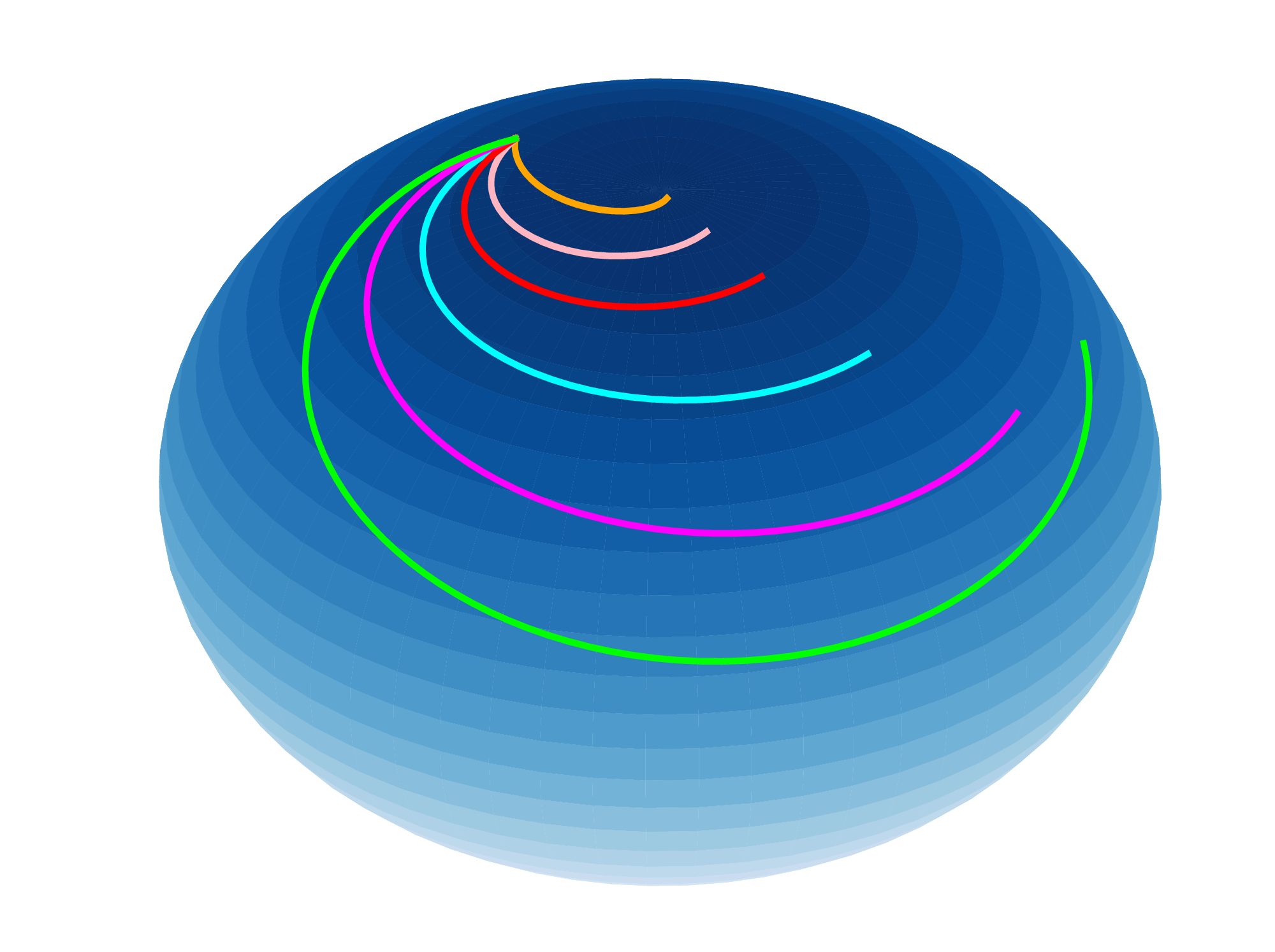}
	\caption{Geodesic segments.}
\end{subfigure}
\begin{subfigure}{.45\textwidth}
  \centering
  \includegraphics[width=0.8\textwidth]{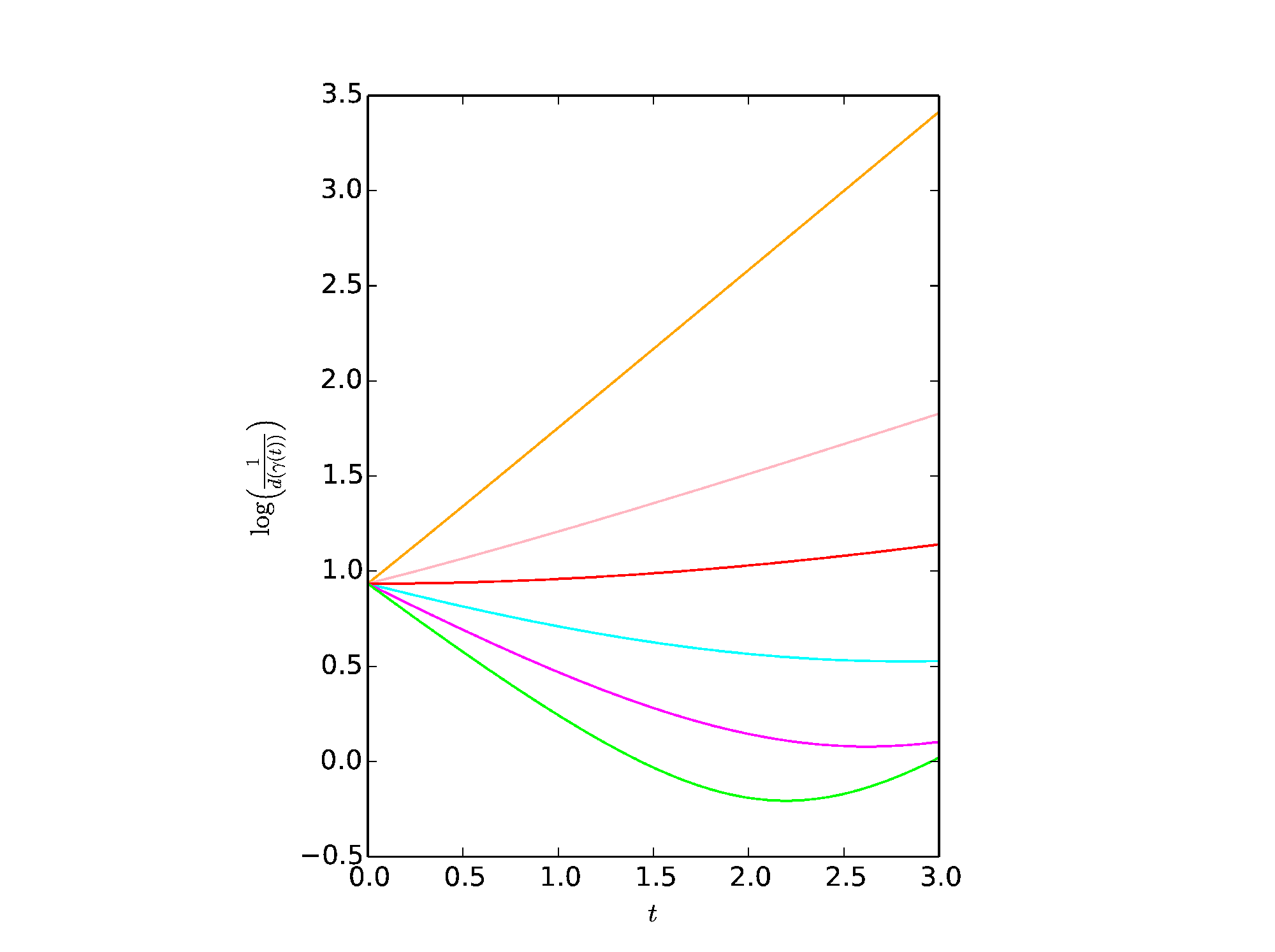}
	\caption{Corresponding \eqref{log_conv} functions.}
\end{subfigure}
\caption{Some geodesic segments in the condition metric when $\M = \S^2$ and $\N$ is a single point, the north pole.}
\label{fig:sphere}
\end{figure}

\begin{example} If $\M$ is the paraboloid given by $z = x^2+y^2$ and $\N$ is the vertex $(0,0,0)$, then the distance from a point $(z\cos\phi,z\sin\phi,z^2)$ to $\N$ is given by the formula $\frac{1}{4}\left(2z\sqrt{4z^2+1}+\arcsin 2z\right)$. In this case the function $\rho$ is smooth everywhere in $\M\setminus\N$ and the numerical experiments suggest that the self-convexity property also holds in this case. Geodesic segments exhibit a curious behaviour: if we throw a geodesic in a direction not opposed to the vertex, it will always eventually fall down towards the vertex describing a spiral (see Figure \ref{fig:paraboloid}).

\begin{figure}[h]
\begin{subfigure}{.45\textwidth}
  \centering
  \includegraphics[width=0.8\textwidth]{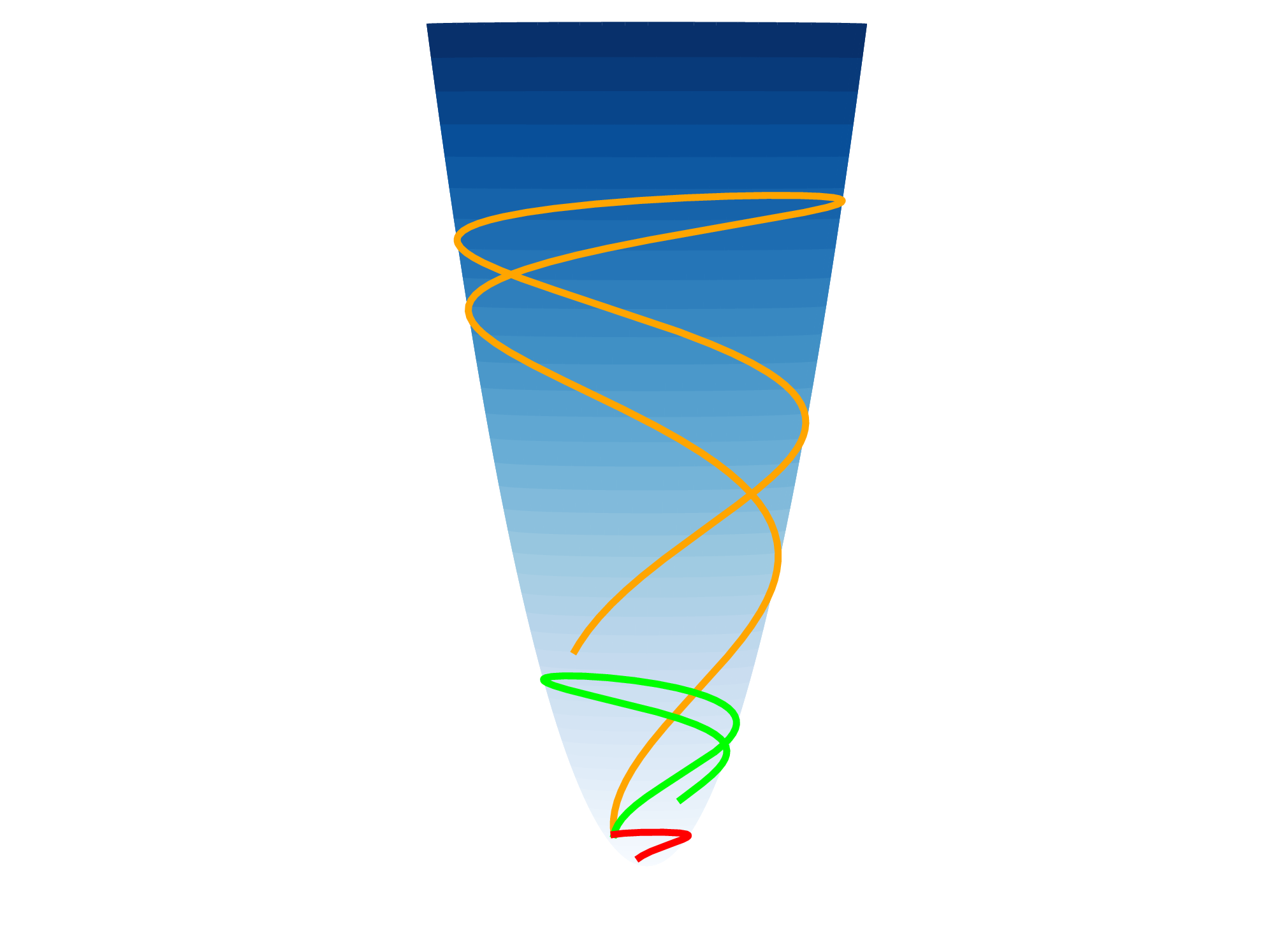}
	\caption{Geodesic segments.}
\end{subfigure}
\begin{subfigure}{.45\textwidth}
  \centering
  \includegraphics[width=0.8\textwidth]{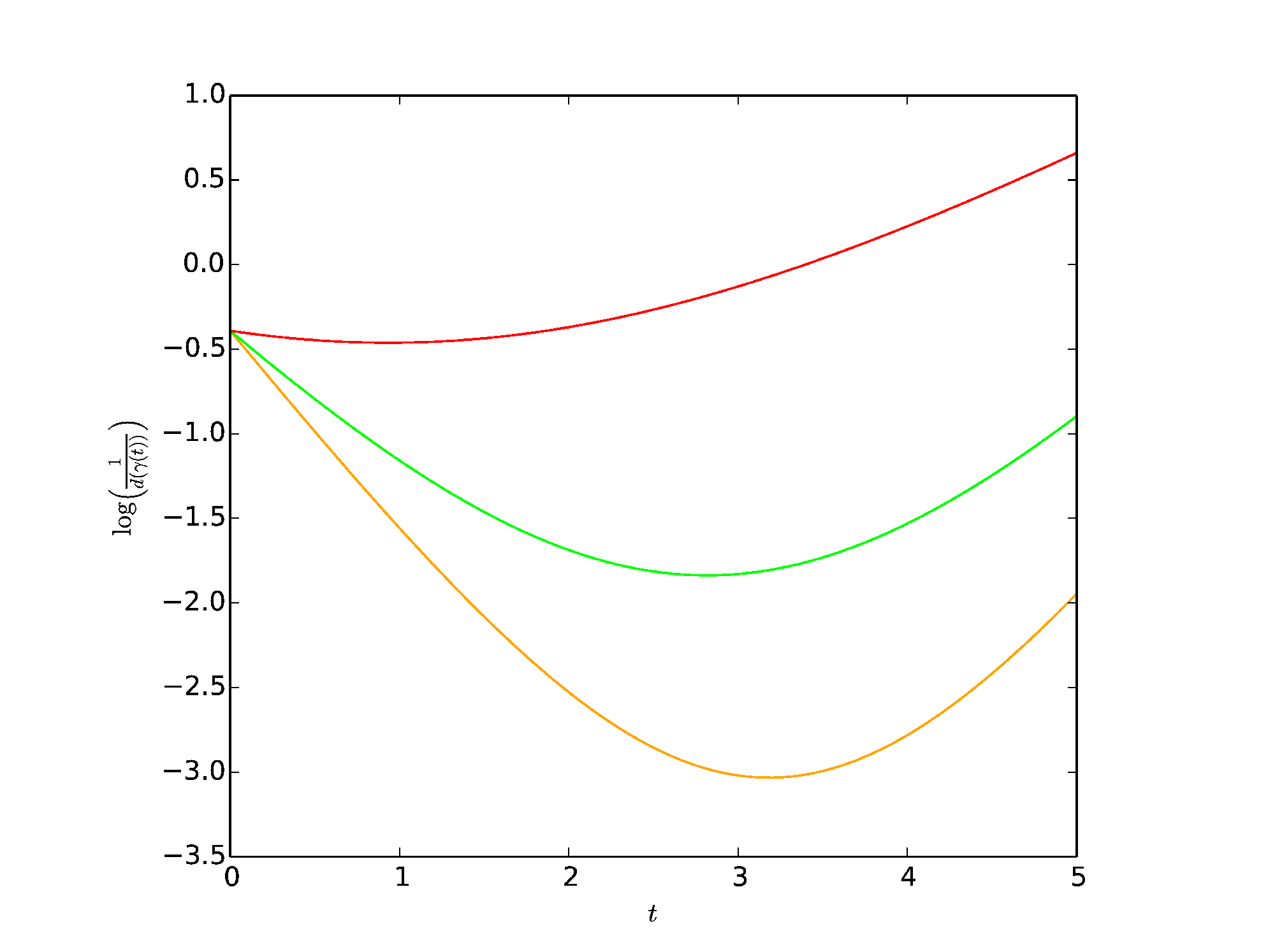}
	\caption{Corresponding \eqref{log_conv} functions.}
\end{subfigure}
\caption{Some geodesic segments in the condition metric when $\M$ is the paraboloid $z = x^2+y^2$ and $\N$ is the vertex of the para\-boloid.}
\label{fig:paraboloid}
\end{figure}

\end{example}

\section{Punctured $\S^n$}

Now let us study the case when $\M$ is the sphere $\S^n$ and $\N$ is a single point, the north pole $\N = \{(1, 0, ..., 0)\}$. The sphere may be parametrized in spherical coordinates as
\begin{align*}
x_1 & = \cos\theta_1,\\
x_2 & = \sin\theta_1\cos\theta_2,\\
x_3 & = \sin\theta_1\sin\theta_2\cos\theta_3,\\
\vdots & \\
x_n & = \sin\theta_1 \cdots \sin\theta_{n-1}\cos\theta_1,\\
x_{n+1} & = \sin\theta_1 \cdots \sin\theta_{n-1}\sin\theta_n,
\end{align*}

\noindent
where $\theta_1, ..., \theta_{n-1}\in (0,\pi)$ and $\theta_n \in (-\pi,\pi)$. The metric tensor with this parametrization is the diagonal matrix
$$g_{\theta} = \sum_{i=1}^n\left(\prod_{j=1}^{i-1}\sin^2\theta_j\right)d\theta_i^2$$

\noindent
and the distance from a point $\theta = (\theta_1, ..., \theta_n)$ to the north pole is $\theta_1$. This yields the condition metric $g_{\theta,\kappa} = \theta_1^{-2}g_\theta$. After the computation of the Christoffel symbols (see, for example, \cite{docarmo}) $\Gamma_{ij}^1$, we obtain
$$\Gamma_{11}^1 = -\frac{1}{\theta_1}, \qquad \Gamma_{12}^1 = 0, \qquad \Gamma_{22}^1 = -\frac{\theta_1\sin\theta_1\cos\theta_1-\sin^2\theta_1}{\theta_1},$$

\noindent
and, for every $j > 2$,
$$\Gamma_{1j}^1 = 0, \qquad \Gamma_{jj}^1 = -\frac{\theta_1\sin\theta_1\cos\theta_1-\sin^2\theta_1}{\theta_1}\prod_{r=2}^{j-1}\sin^2\theta_r.$$

The remaining $\Gamma_{ij}^1$ are zero. With the Christoffel symbols we obtain the first of the geodesic equations, which is the only one that we will need.

\begin{eqnarray}
\ddot{\theta}_1 -\frac{1}{\theta_1}\dot{\theta}_1^2-\frac{\theta_1\sin\theta_1\cos\theta_1-\sin^2\theta_1}{\theta_1}\dot{\theta}_2^2\nonumber &\\
-\sum_{j=3}^n\left(\frac{\theta_1\sin\theta_1\cos\theta_1-\sin^2\theta_1}{\theta_1}\prod_{r=2}^{j-1}\sin^2\theta_r\right)\dot{\theta}_j^2 &= 0.\label{geod_sn_cond}
\end{eqnarray}

\begin{proposition}\label{punctured_sn_smooth} For $\M = \S^n$ and $\N$ a single point, the smooth self-convexity property holds.\end{proposition}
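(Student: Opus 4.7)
The plan is to reduce the statement, via \eqref{log_conv_equiv}, to the pointwise inequality $\frac{d^2}{dt^2}\log\theta_1(t) \leq 0$ along any geodesic $\gamma$ of $g_\kappa$ in $\U$. Since $\N$ is a single point, $\rho = \theta_1$ is smooth on $\U = \S^n \setminus \{N,S\}$ (with $S$ the south pole, the antipode of $\N$), so this reformulation is indeed equivalent to the smooth self-convexity property.

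First I would compute
$$\frac{d^2}{dt^2}\log\theta_1 = \frac{\ddot\theta_1}{\theta_1} - \frac{\dot\theta_1^2}{\theta_1^2}$$
and substitute $\ddot\theta_1$ from the condition-metric geodesic equation \eqref{geod_sn_cond}. The decisive observation is that $\Gamma_{11}^1 = -1/\theta_1$ contributes exactly $\dot\theta_1^2/\theta_1$ to $\ddot\theta_1$; after dividing by $\theta_1$, this cancels the negative term $\dot\theta_1^2/\theta_1^2$. All dependence on $\dot\theta_1$ therefore disappears and only a simple remainder survives:
$$\frac{d^2}{dt^2}\log\theta_1 = \frac{\theta_1\sin\theta_1\cos\theta_1 - \sin^2\theta_1}{\theta_1^2}\,Q, \qquad Q := \dot\theta_2^2 + \sum_{j=3}^n\Bigl(\prod_{r=2}^{j-1}\sin^2\theta_r\Bigr)\dot\theta_j^2 \geq 0.$$

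Everything then reduces to checking that $\theta_1\sin\theta_1\cos\theta_1 - \sin^2\theta_1 \leq 0$ on $(0,\pi)$. Since $\sin\theta_1 > 0$ there, this amounts to the elementary inequality $\theta_1\cos\theta_1 \leq \sin\theta_1$, which holds because $f(\theta) := \sin\theta - \theta\cos\theta$ satisfies $f(0) = 0$ and $f'(\theta) = \theta\sin\theta \geq 0$ on $[0,\pi]$. I do not anticipate a genuine obstacle: the only step that demands care is verifying the cancellation of the $\dot\theta_1^2$ terms, which is essentially automatic once one writes out the geodesic equation, and the final sign check is a textbook exercise in elementary trigonometry.
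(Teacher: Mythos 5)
Your proposal is correct and is essentially the paper's own argument: substitute the first condition-metric geodesic equation \eqref{geod_sn_cond} into $\frac{d^2}{dt^2}\log\theta_1=(\ddot\theta_1\theta_1-\dot\theta_1^2)/\theta_1^2$, observe that the $\Gamma_{11}^1$ term cancels the $-\dot\theta_1^2/\theta_1^2$ contribution, and conclude from the sign of $\theta\sin\theta\cos\theta-\sin^2\theta$ on $(0,\pi)$. The only (welcome) addition is that you actually prove this elementary inequality, which the paper merely asserts.
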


\begin{proof} Let $\gamma$ be a geodesic, so the distance function from $\gamma$ to the north pole is $\gamma_1$. Replacing $\gamma$ in \eqref{geod_sn_cond} and multiplying this equation by $\gamma_1$, we obtain
\begin{align}\gamma_1''\gamma_1 -\gamma_1'^2 &= (\gamma_1\sin\gamma_1\cos\gamma_1-\sin^2\gamma_1)\gamma_2'^2\nonumber\\
&+ \sum_{j=3}^n\left((\gamma_1\sin\gamma_1\cos\gamma_1-\sin^2\gamma_1)\prod_{r=2}^{j-1}\sin^2\gamma_r\right)\gamma_j'^2.\label{geod_sn_cond_2}\end{align}

The real function $x \mapsto x\sin x\cos x-\sin^2x$ is negative for every $x \in (0,\pi)$, so the left hand side of \eqref{geod_sn_cond_2} is always negative. Now note that
$$\frac{d^2}{dt^2}\log\rho(\gamma(t)) = \frac{d^2}{dt^2}\log\gamma_1(t) = \frac{\gamma_1''\gamma_1-\gamma_1'^2}{\gamma_1^2} \leq 0,$$

\noindent
satisfying \eqref{log_conv_equiv}.
\end{proof}

Although it is not clear in spherical coordinates, the distance function is not smooth at the south pole $(-1, 0, ..., 0)$, but the self-convexity property also holds here. In order to prove this fact, we will need the following result.

\begin{lemma}\label{conv_branches} Let $f:(a,b) \rightarrow \R$ a continuous function that reaches a global minimum at $c \in (a,b)$. If both branches $f_1 = f\big|_{(a,c)}$ and $f_2 = f\big|_{(c,b)}$ are convex, then $f$ is convex.\end{lemma}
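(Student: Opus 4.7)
The plan is to verify the defining inequality for convexity, namely $f((1-t)x+ty)\le (1-t)f(x)+tf(y)$ for all $x<y$ in $(a,b)$ and all $t\in[0,1]$, by splitting into cases according to the position of $x$ and $y$ relative to $c$. A preliminary observation is that, by continuity of $f$, the convexity of $f_1$ on $(a,c)$ automatically extends to convexity of $f$ restricted to the closed half interval $(a,c]$, and likewise $f_2$ extends convexly to $[c,b)$. Consequently, the case in which $x$ and $y$ both lie in the same closed half interval is immediate and does not even use the global-minimum hypothesis.

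The interesting case is $x<c<y$. Setting $z=(1-t)x+ty$, I would first dispose of $z=c$ using that $f(c)\le f(x)$ and $f(c)\le f(y)$, whence any convex combination of $f(x)$ and $f(y)$ dominates $f(c)$. By the symmetry between the two branches, it then suffices to treat $z<c$. Let $L$ denote the secant line through $(x,f(x))$ and $(y,f(y))$, and let $L_1$ denote the secant line through $(x,f(x))$ and $(c,f(c))$. Convexity of $f$ on $(a,c]$ gives $f(z)\le L_1(z)$, so the proof reduces to showing $L_1(z)\le L(z)$ on $[x,c)$.

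This last inequality is a comparison of slopes: the slope from $(x,f(x))$ to $(c,f(c))$ should not exceed the slope from $(x,f(x))$ to $(y,f(y))$. After clearing denominators, that slope inequality rearranges to
$$(c-x)\bigl(f(y)-f(c)\bigr) + (y-c)\bigl(f(x)-f(c)\bigr) \;\ge\; 0,$$
and this holds term by term because $c$ is a global minimum of $f$. I do not anticipate any real obstacle; the argument is entirely elementary once the cases are organized around $c$, and the role of the global-minimum hypothesis is precisely to supply this final slope comparison that bridges the two branches.
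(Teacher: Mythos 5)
Your argument is correct and complete. The paper offers no proof of this lemma (it is explicitly left as an exercise to the reader), so there is nothing to compare against; your case analysis --- extending convexity of each branch to the closed half-intervals $(a,c]$ and $[c,b)$ by continuity, handling $x<c<y$ via the secant comparison, and reducing the cross-$c$ case to the slope inequality $(c-x)\bigl(f(y)-f(c)\bigr)+(y-c)\bigl(f(x)-f(c)\bigr)\ge 0$, which holds term by term precisely because $c$ is a global minimum --- is the natural elementary route and fills the gap the paper leaves open.
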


The proof is left as an exercise to the reader.

\begin{corollary}\label{punctured_sn} For $\M = \S^n$ and $\N$ a single point, the self-convexity property holds.\end{corollary}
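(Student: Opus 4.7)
The plan is to extend Proposition \ref{punctured_sn_smooth} across the antipode $S$ of $N$, which is the only point of $\S^n\setminus\{N\}$ where $\rho=d(\cdot,N)$ fails to be smooth (and the unique point at which $\rho=\pi$). Fix a geodesic $\gamma:[a,b]\to\S^n\setminus\{N\}$ in the condition metric and set $f(t)=\log(1/\rho(\gamma(t)))$ and $T=\gamma^{-1}(S)$. The set $T$ is closed, so $[a,b]\setminus T$ is a relatively open union of subintervals on each of which $\gamma$ lies entirely in $\U=\S^n\setminus\{N,S\}$; by Proposition \ref{punctured_sn_smooth}, $f$ is convex on every such subinterval. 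Moreover $f\ge-\log\pi$ on $[a,b]$, with equality exactly on $T$.

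To obtain convexity of $f$ on the whole of $[a,b]$ I would argue directly via chords rather than apply Lemma \ref{conv_branches} iteratively, which would be awkward if $T$ were more than a single point. Fix $u<v$ in $[a,b]$, let
\begin{equation*}
\ell(t)=\frac{v-t}{v-u}f(u)+\frac{t-u}{v-u}f(v),
\end{equation*}
and put $g=f-\ell$. Then $g(u)=g(v)=0$, and since $\ell(t)$ is a convex combination of $f(u),f(v)\ge-\log\pi$ we also have $g(t)=-\log\pi-\ell(t)\le0$ for every $t\in T\cap[u,v]$. On each connected component $(\alpha,\beta)$ of $[u,v]\setminus T$ the function $g$ is convex (being the sum of the convex $f$ and the affine $-\ell$), its endpoints $\alpha,\beta$ belong to $\{u,v\}\cup T$ and therefore satisfy $g\le0$, so continuity and convexity force $g\le0$ on $[\alpha,\beta]$. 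Gluing the pieces gives $g\le0$ on all of $[u,v]$, which is exactly the chord inequality $f(t)\le\ell(t)$. Since $u,v$ were arbitrary, $f$ is convex on $[a,b]$, establishing the self-convexity property.

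The main obstacle is conceptual: a priori the set $T$ of times where $\gamma$ hits $S$ can be any closed subset of $[a,b]$, and there is no immediate reason to expect it to be a single point, so the one-point gluing provided by Lemma \ref{conv_branches} does not apply literally. What saves the argument is that every $t\in T$ gives the same global minimum value $-\log\pi$ of $f$, which forces every chord to lie above $f$ at every point of $T$ and makes the piecewise convexity on the complementary open intervals extend, by continuity, to convexity on the whole interval.
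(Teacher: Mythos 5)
Your argument is correct, and it takes a genuinely different route from the paper's. The paper reduces to a single crossing of the south pole: it takes a geodesic with $\gamma(0)=S$, argues that by local minimality one may assume $\gamma(t)\neq S$ for $t\neq 0$, observes that $t=0$ is then a global minimum of $t\mapsto\log(1/\rho(\gamma(t)))$, and glues the two convex branches with Lemma \ref{conv_branches}. You instead prove the chord inequality directly: since $f=\log(1/\rho\circ\gamma)$ attains its global minimum value $-\log\pi$ exactly on the closed set $T=\gamma^{-1}(S)$, every chord of $f$ lies above $f$ at each point of $T$, and on each component of the complement of $T$ the convexity furnished by Proposition \ref{punctured_sn_smooth} (applied to the restricted geodesic, which lies in $\U$) pushes $f$ below the chord there as well. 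What your approach buys is robustness: it needs neither Lemma \ref{conv_branches} nor any claim that the times at which the geodesic meets $S$ form a single point (or even an isolated set), a point the paper dispatches with the brief ``we may suppose'' appeal to local minimality; your argument works verbatim for an arbitrary closed $T$, precisely because all of $T$ sits at the common global minimum of $f$. What the paper's approach buys is brevity and a reusable one-point gluing lemma (also invoked for the two-point example in the plane), at the cost of leaving the reduction to a single crossing, and the passage from convexity near each time to convexity on the whole interval, somewhat implicit.
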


\begin{proof} Proposition \ref{punctured_sn_smooth} guarantees that the self-convexity property holds for every geodesic contained in $\U$. Let $\gamma:(-\varepsilon,\varepsilon) \rightarrow \S^n$ be a geodesic across the south pole, with $\gamma(0) = (-1,0, ..., 0) = S$. Since $\gamma$ is locally minimizing, we may suppose that $\gamma(t) \neq S$ if $t \neq 0$, so $0$ is a global minimum for the function $t \mapsto \log\frac{1}{\rho(\gamma(t))}$. Restricting this function to $(-\varepsilon,0)$ and $(0,\varepsilon)$ yields two convex branches by Proposition \ref{punctured_sn_smooth} and the whole function is convex by Lemma \ref{conv_branches}.\end{proof}

\section{Preliminary results}

Before proving Theorem \ref{sn} we will present some technical results that will be useful when doing calculations. We will denote by $K(x)$ the (unique) closest point of $\N$ to a point $x \in \U$. We have the following facts about $K$ and $\rho$ (see also Foote \cite{foote}, Li and Nirenberg \cite{nirenberg}):

\begin{proposition}\cite[Proposition 9]{beltran09} The distance function $\rho$ is $\mathcal{C}^2$ on $\U$ and the function $K$ is $\mathcal{C}^1$ on $\U$.
\end{proposition}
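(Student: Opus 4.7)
The plan is to realize both $K$ and $\rho$ as components of a local inverse of the normal exponential map, and then to recover the missing derivative for $\rho$ via an envelope-type argument exploiting the first-order optimality condition satisfied by $K$.

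Concretely, I would introduce the normal exponential map $E\colon T^\perp\N \to \M$, $E(p,v) = \exp_p(v)$. Since $\N$ is $\mathcal{C}^2$ the total space $T^\perp\N$ is a $\mathcal{C}^1$ manifold of the same dimension as $\M$, and $E$ is $\mathcal{C}^1$ (the loss of regularity comes only from $\N$, since $\exp$ on $\M$ itself is smooth). For $x\in\U$, uniqueness of $p_0 = K(x)$ together with the standard fact that a minimizing segment from $x$ to $\N$ must meet $\N$ orthogonally produce a unique preimage $(p_0,v_0)\in T^\perp\N$ with $E(p_0,v_0) = x$ and $\|v_0\|_g = \rho(x)$.

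The key geometric input is that $dE_{(p_0,v_0)}$ is an isomorphism whenever $x\in\U$: if it were singular, $x$ would be a focal point of $\N$ along the geodesic $t\mapsto \exp_{p_0}(tv_0)$, and the classical theory of focal points of submanifolds then produces, in every neighborhood of such $x$, points with at least two distinct closest points in $\N$, contradicting the openness of $\U$. Invoking the $\mathcal{C}^1$ inverse function theorem, $E$ is locally a $\mathcal{C}^1$ diffeomorphism near $(p_0,v_0)$, so on a neighborhood of $x$ we obtain $E^{-1}(y) = (K(y),v(y))$ with both components $\mathcal{C}^1$. Projecting on the first factor gives $K\in\mathcal{C}^1(\U)$, and the identity $\rho(y)^2 = g_{K(y)}(v(y),v(y))$ together with $v(y)\neq 0$ gives $\rho\in\mathcal{C}^1(\U)$.

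The main obstacle is then upgrading $\rho$ from $\mathcal{C}^1$ to $\mathcal{C}^2$, since the inverse function theorem for $\mathcal{C}^1$ maps only returns a $\mathcal{C}^1$ inverse. Here I would invoke an envelope argument: because $K(y)$ minimizes $p\mapsto d(y,p)$ over $p\in\N$, the partial derivative of $d(y,\cdot)$ at $K(y)$ is normal to $T_{K(y)}\N$, while $dK(y)$ takes values in $T_{K(y)}\N$, so the two contributions cancel when one differentiates $\rho(y) = d(y,K(y))$. This leaves $\nabla\rho(y) = \partial_y d(y,p)\big|_{p=K(y)}$, a composition of the smooth function $\partial_y d$ on $\M\times\M$ with the $\mathcal{C}^1$ map $y\mapsto (y,K(y))$; hence $\nabla\rho\in\mathcal{C}^1(\U)$, i.e., $\rho\in\mathcal{C}^2(\U)$. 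This envelope cancellation, rather than the invertibility of $dE$, is the step where the finer arguments of Foote and of Li--Nirenberg cited just before the proposition do their real work.
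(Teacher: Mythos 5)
The paper itself offers no proof of this proposition: it is quoted verbatim from \cite[Proposition 9]{beltran09}, with Foote \cite{foote} and Li--Nirenberg \cite{nirenberg} cited for the underlying regularity theory. Your outline is the same standard strategy used in those sources (normal exponential map $E$, inverse function theorem to get $K$ and the normal vector field of class $\mathcal{C}^1$, then an envelope identity to recover one more derivative for $\rho$), and the last step --- $\nabla\rho(y)=\partial_y d(y,p)\big|_{p=K(y)}$, hence $\rho\in\mathcal{C}^2$ --- is indeed the heart of Foote's argument. However, two of your intermediate steps have genuine gaps, and they are exactly the places where the definition of $\U$ has to be used in full.

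First, the ``classical fact'' you invoke to rule out singularity of $dE_{(p_0,v_0)}$ is false as stated: for $\M=\S^n$ and $\N$ the north pole, the south pole is a focal point along every minimizing normal geodesic, yet it and all nearby points have a \emph{unique} closest point in $\N$. What is classical is that a normal geodesic stops minimizing past its first focal point, and that points with two minimizing normal \emph{segments} (not two distinct feet) are dense in the cut locus; neither contradicts the only properties of $\U$ you actually use (openness and uniqueness of the foot). Second, the identification $E^{-1}(y)=(K(y),v(y))$, equivalently $\rho(y)=\|v(y)\|$, on a neighborhood of $x$ is unjustified: it requires knowing that the minimizing normal vectors of points near $x$ stay near $(p_0,v_0)$, i.e.\ that $x$ is not a cut point of $\N$. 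Uniqueness of the foot plus openness does not give this: on a flat cylinder with $\N$ a single point, every point has a unique foot and $dE$ is everywhere nonsingular, so your argument runs verbatim at a point of the cut line and ``proves'' $\rho\in\mathcal{C}^2$ there, which is false (already at your step ``uniqueness of $p_0$ plus orthogonality produce a unique preimage'' --- there may be several minimizing normal segments ending at the same foot). The repair is to use the part of the definition of $\U$ you never touch (smoothness of $\rho$ near $x$, i.e.\ exclusion of the cut locus, or the explicit non-degeneracy condition in \cite{beltran09}): from $\|\nabla\rho\|=1$ and smoothness one gets that all minimizing segments arrive at $x$ with velocity $\nabla\rho(x)$ (so the minimizing preimage is unique), that the geodesic still minimizes slightly beyond $x$ (so $x$ is not focal, replacing your first step), and, by a compactness/limit argument, that minimizing normal vectors vary continuously near $x$, which finally legitimizes $E^{-1}=(K,v)$ and, with your envelope step, completes the proof along the lines of \cite{foote} and \cite{beltran09}.
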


\begin{lemma}\label{xminuskx} The vector $x-K(x)$ is orthogonal to $T_{K(x)}\N$.\end{lemma}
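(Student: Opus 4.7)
The plan is to reduce the claim to a first-order variational condition, after passing from the intrinsic spherical distance to the ambient chord distance. The key observation is that for $x,p \in \S^n \subset \R^{n+1}$, one has $\|x-p\|^2 = 2 - 2\langle x,p\rangle$, while the Riemannian distance on the sphere equals $\arccos\langle x,p\rangle$; both are strictly decreasing functions of $\langle x,p\rangle$. Consequently, minimizing $p \mapsto d(x,p)$ over $\N$ and minimizing $p \mapsto \|x-p\|^2$ over $\N$ produce the \emph{same} unique minimizer, namely $K(x)$.

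With this in hand, I would apply a standard Lagrange-style argument. Since $\N$ is a $\mathcal{C}^2$ submanifold without boundary and the smooth function $F:\N \to \R$ defined by $F(p) = \|x-p\|^2$ attains a minimum at $K(x)$, its differential must annihilate $T_{K(x)}\N$. Given any $v \in T_{K(x)}\N$, choose a $\mathcal{C}^1$ curve $c:(-\varepsilon,\varepsilon)\to\N$ with $c(0)=K(x)$ and $\dot c(0)=v$; differentiating $F(c(t)) = \langle x - c(t), x - c(t)\rangle$ at $t=0$ and using the critical point condition yields
\[
0 \;=\; \frac{d}{dt}\bigg|_{t=0}\|x-c(t)\|^2 \;=\; -2\langle x - K(x),\, v\rangle.
\]
Since $v \in T_{K(x)}\N$ was arbitrary, this gives $x-K(x)\perp T_{K(x)}\N$ in the ambient Euclidean inner product, which is the claim.

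There is essentially no obstacle here: the only point requiring any thought is the initial reduction from spherical to chordal distance, so that the unconstrained ambient vector $x-K(x)$ is really the gradient of something whose restriction to $\N$ has a critical point at $K(x)$. Once this identification is made, the conclusion follows from the first-order necessary condition for a constrained minimum, and the lemma is proved.
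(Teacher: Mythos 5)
Your proof is correct and follows essentially the same route as the paper: both arguments express the first-order criticality of the distance-to-$\N$ function at $K(x)$ by differentiating along a curve in $\N$ and reading off $\langle x-K(x),v\rangle=0$ in the ambient inner product. The only cosmetic difference is that you pass via monotonicity to the squared chordal distance before differentiating, whereas the paper differentiates the $2\arcsin\left(\tfrac{\|x-y\|}{2}\right)$ formula directly and observes the nonvanishing prefactor; this changes nothing essential.
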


\begin{proof} Let $x,y \in \S^2 \subset \mathbb{R}^3$ be two points. Then the spherical distance between $x$ and $y$ is $d_{\S^2}(x,y) = 2\arcsin\left(\frac{\|x-y\|}{2}\right)$. Let us fix $x$ and consider the function $\delta:\N\rightarrow \mathbb{R}$ given by
$$\delta(y) = d_{\S^2}(x,y) = 2\arcsin\left(\frac{\|x-y\|}{2}\right).$$

This function reaches a minimum at $y = K(x)$, hence $D\delta_{K(x)} \equiv 0$. Let $\dot{x}$ be a tangent vector to $\N$ at the point $K(x)$ and let $c$ be a smooth curve with $c(0) = K(x)$ and $c'(0) = \dot{x}$. Then
$$\frac{d}{dt}\delta(c(t)) = -\left(1-\frac{\|x-c(t)\|^2}{4}\right)^{-1/2}\frac{\langle x-c(t),c'(t)\rangle}{\|x-c(t)\|},$$

\noindent
(note that $\frac{d}{dt}\delta(c(t))$ is well-defined because we are on $\U$) and so
$$0 = D\delta_{K(x)}\dot{x} = \frac{d}{dt}\Big|_{t=0}\delta(c(t)) = -\left(1-\frac{\|x-K(x)\|^2}{4}\right)^{-1/2}\frac{\langle x-K(x),\dot{x}\rangle}{\|x-K(x)\|}.$$

The product above is $0$ if and only if $\langle x-K(x),\dot{x}\rangle = 0$.\end{proof}

\begin{remark}\label{shortcutremark} Lemma \ref{xminuskx} and the fact that $\langle c,c'\rangle = 0$ for every curve $c:I \rightarrow \S^2$, give us a shortcut that we will use many times in calculations:

$$\langle c(t)-K(c(t)),c'(t)-DK_{c(t)}c'(t)\rangle$$
$$= \langle c(t)-K(c(t)),-DK_{c(t)}c'(t)\rangle + \langle c(t),c'(t)\rangle + \langle -K(c(t)),c'(t)\rangle$$
\begin{equation}\label{shortcut} = \langle -K(c(t)),c'(t)\rangle.\end{equation}
\end{remark}

We slightly rephrase \cite[Proposition 3]{beltran09} here.

\begin{proposition}\label{desig} Let $\gamma(t)$ be a geodesic in the condition metric with $\gamma(0) = x \in \U$ and $\gamma'(0) = \dot{x}$. Then the sign of the second derivative of the function \eqref{log_conv} is the same as the sign of the following quantity:
$$\|\dot{x}\|^2\|D\rho_x\|^2-(D\rho_x\dot{x})^2-\rho(x)D^2\rho_x(\dot{x},\dot{x}),$$

\noindent
where the norms and the second covariant derivative are taken with respect to the original metric on $\M$.

In particular, the smooth self-convexity property is satisfied if and only if the quantity above is nonnegative for every $x \in \U$ and $\dot{x} \in T_x\U$.
\end{proposition}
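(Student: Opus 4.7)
The plan is to compute the second derivative of $t\mapsto\log(1/\rho(\gamma(t)))$ directly via the chain rule, and then eliminate the resulting covariant acceleration $\nabla_{\dot\gamma}\dot\gamma$ (with respect to the original metric $g$, not $g_\kappa$) using the geodesic equation of the condition metric. Since any affine reparametrization of $\gamma$ scales this second derivative by a positive factor, no normalization of $\dot x$ is needed and we may work with the given parametrization.

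The condition metric is the conformal rescaling $g_\kappa = e^{2\phi}g$ with $\phi = -\log\rho$. The standard relation between the Levi--Civita connections $\tilde\nabla$ of $g_\kappa$ and $\nabla$ of $g$ is
\[
\tilde\nabla_X Y = \nabla_X Y + (X\phi)Y + (Y\phi)X - g(X,Y)\nabla\phi.
\]
Imposing $\tilde\nabla_{\dot\gamma}\dot\gamma = 0$ and substituting $\nabla\phi = -\nabla\rho/\rho$ and $\dot\gamma\phi = -D\rho_\gamma(\dot\gamma)/\rho$ yields the key identity
\[
\nabla_{\dot\gamma}\dot\gamma \;=\; \frac{2\,D\rho_\gamma(\dot\gamma)}{\rho(\gamma)}\,\dot\gamma \;-\; \frac{\|\dot\gamma\|^2}{\rho(\gamma)}\,\nabla\rho.
\]

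Setting $f(t) = \log\rho(\gamma(t))$, so that the function in \eqref{log_conv} is $-f$, the chain rule gives
\[
f''(t) \;=\; \frac{D^2\rho_\gamma(\dot\gamma,\dot\gamma) + D\rho_\gamma(\nabla_{\dot\gamma}\dot\gamma)}{\rho(\gamma)} \;-\; \frac{(D\rho_\gamma(\dot\gamma))^2}{\rho(\gamma)^2}.
\]
Plugging in the identity above and collecting terms, the quadratic contributions in $D\rho_\gamma(\dot\gamma)$ combine and leave
\[
-f''(t) \;=\; \frac{1}{\rho(\gamma)^2}\Bigl[\|\dot\gamma\|^2\|D\rho_\gamma\|^2 - (D\rho_\gamma(\dot\gamma))^2 - \rho(\gamma)\,D^2\rho_\gamma(\dot\gamma,\dot\gamma)\Bigr].
\]
Evaluating at $t=0$ and noting that the prefactor $\rho(x)^{-2}$ is strictly positive delivers the claimed equivalence of signs, from which the last sentence of the proposition is immediate.

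The main point requiring care is the sign and factor bookkeeping in the conformal transformation of the connection, together with the observation that $\gamma$ is \emph{not} a $g$-geodesic, so the $\nabla_{\dot\gamma}\dot\gamma$ term above must genuinely be carried through; all norms, gradients and Hessians remain those of the original metric $g$ throughout.
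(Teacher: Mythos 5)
Your proposal is correct: the conformal rescaling $g_\kappa = e^{2\phi}g$ with $\phi=-\log\rho$, the standard transformation rule for the Levi--Civita connection, and the elimination of $\nabla_{\dot\gamma}\dot\gamma$ via $\tilde\nabla_{\dot\gamma}\dot\gamma=0$ give exactly
$-\frac{d^2}{dt^2}\log\rho(\gamma(t)) = \rho^{-2}\bigl[\|\dot\gamma\|^2\|D\rho\|^2-(D\rho\,\dot\gamma)^2-\rho\,D^2\rho(\dot\gamma,\dot\gamma)\bigr]$,
and since $\rho^{-2}>0$ the sign statement and the ``in particular'' clause follow (using homogeneity of degree $2$ in $\dot x$ and the existence of a condition geodesic through every $(x,\dot x)$). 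Note that the paper does not prove this proposition at all --- it is quoted from \cite[Proposition 3]{beltran09} --- so your computation is a self-contained reconstruction of the cited result rather than a divergence from the paper; it is the natural argument and agrees with the route taken in that reference. The only point you pass over silently is why a geodesic of the condition metric, defined in the paper as a locally length-minimizing Lipschitz curve, actually satisfies the smooth geodesic equation $\tilde\nabla_{\dot\gamma}\dot\gamma=0$ on $\U$: this needs the regularity statement that $\rho$ is $\mathcal{C}^2$ on $\U$ (so $g_\kappa$ is a genuinely differentiable metric there and minimizers solve the Euler--Lagrange/geodesic ODE, hence are $\mathcal{C}^2$), which is exactly what the preceding proposition quoted from \cite[Proposition 9]{beltran09} supplies; one sentence invoking it would close that gap. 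Your remark on affine reparametrization correctly disposes of the arc-length normalization in the paper's definition of geodesic.
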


\begin{remark} For every $x \in \S^n \subset \mathbb{R}^{n+1}$ and $\dot{x}\in T_x\S^n$, the unique maximal geodesic $\gamma$ with $\gamma(0) = x$ and $\gamma'(0) = \dot{x}$ is given by $\gamma(t) = \cos(\|\dot{x}\|t)x+\frac{1}{\|\dot{x}\|}\sin(\|\dot{x}\|t)\dot{x}$, so one can check that for any such a geodesic,
\begin{equation}\label{geodsphere}\gamma''(0) = -\|\dot{x}\|^2x\end{equation}
\end{remark}

In order to apply Proposition \ref{desig} we need to compute the derivatives of $\rho$ with respect to the original metric on the sphere. Let $x \in \U$ and $\dot{x} \in T_x\U$, and let $c:I \rightarrow \S^n$ be a curve with $c(0) = x$ and $c'(0) = \dot{x}$. Then $D\rho_x\dot{x} = \frac{d}{dt}\big|_{t=0}\rho(c(t))$ and
\begin{align*}
\frac{d}{dt}\rho(c(t)) & = \frac{d}{dt}2\arcsin\left(\frac{\|c(t)-K(c(t))\|}{2}\right) \\
& = 2\left(1-\frac{\|c(t)-K(c(t))\|^2}{4}\right)^{-1/2}\frac{1}{2}\frac{\langle c(t)-K(c(t)),c'(t)-DK_{c(t)}c'(t)\rangle}{\|c(t)-K(c(t))\|} \\
& = \left(1-\frac{\|c(t)-K(c(t))\|^2}{4}\right)^{-1/2}\frac{\langle -K(c(t)),c'(t)\rangle}{\|c(t)-K(c(t))\|},
\end{align*}
\noindent
where we have used \eqref{shortcut} for the last equality. Then,

\begin{lemma}\label{firstderlemma}For every $x \in \U\subseteq \S^n$ and $\dot{x} \in T_x\U$, we have that
\begin{equation}\label{firstder}D\rho_x\dot{x} = -\left(1-\frac{\|x-K(x)\|^2}{4}\right)^{-1/2}\frac{\langle K(x),\dot{x}\rangle}{\|x-K(x)\|}.\end{equation}
\end{lemma}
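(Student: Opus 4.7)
The plan is to realize $D\rho_x\dot{x}$ as the $t=0$ derivative of $\rho\circ c$ for any smooth curve $c$ on $\S^n$ with $c(0)=x$ and $c'(0)=\dot{x}$, and to reduce the computation to an elementary chain rule. First I would write the spherical distance between points $p,q\in\S^n\subset\mathbb{R}^{n+1}$ in the extrinsic form $d_{\S^n}(p,q) = 2\arcsin(\|p-q\|/2)$, so that $\rho(c(t)) = 2\arcsin(\|c(t)-K(c(t))\|/2)$. Here I use that $c(t)\in\U$ for $t$ close to $0$ (by openness of $\U$), so $K\circ c$ is $\mathcal{C}^1$ in a neighbourhood of $0$.

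Next I would apply the chain rule. The derivative of the outer $\arcsin$ contributes the factor $(1-\|c(t)-K(c(t))\|^2/4)^{-1/2}$, the derivative of $\|\cdot\|$ contributes $\|c(t)-K(c(t))\|^{-1}$, and the remaining piece is the inner product $\langle c(t)-K(c(t)),\,c'(t)-DK_{c(t)}c'(t)\rangle$. To simplify this inner product I would invoke the shortcut of Remark \ref{shortcutremark}: the term $\langle c(t)-K(c(t)),-DK_{c(t)}c'(t)\rangle$ vanishes because $c(t)-K(c(t))$ is orthogonal to $T_{K(c(t))}\N$ by Lemma \ref{xminuskx} and $DK_{c(t)}c'(t)$ lies in $T_{K(c(t))}\N$, while $\langle c(t),c'(t)\rangle = 0$ because $c$ is a curve on the unit sphere. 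What remains is $\langle -K(c(t)),c'(t)\rangle$.

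Finally I would evaluate at $t=0$, substituting $c(0)=x$ and $c'(0)=\dot{x}$, to produce exactly the claimed formula. There is no genuine obstacle here: the only mildly subtle step is the cancellation of the $DK$ term, which is precisely the content of Remark \ref{shortcutremark} and is already available. The rest is the chain rule applied to the explicit extrinsic expression for the spherical distance.
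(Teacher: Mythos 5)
Your proposal is correct and follows essentially the same route as the paper: differentiate $\rho(c(t)) = 2\arcsin\bigl(\|c(t)-K(c(t))\|/2\bigr)$ by the chain rule and simplify the inner product via the shortcut of Remark \ref{shortcutremark} (Lemma \ref{xminuskx} plus $\langle c,c'\rangle = 0$ on the sphere), then evaluate at $t=0$. No gaps.
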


Now let us compute the second covariant derivative $D^2\rho_x(\dot{x},\dot{x})$ with respect to the original metric on the sphere. Let $\gamma:I \rightarrow \U$ be a geodesic with $\gamma(0) = x$ and $\gamma'(0) = \dot{x}$. We have that $D^2\rho_x(\dot{x},\dot{x}) = \frac{d^2}{dt^2}\big|_{t=0}\rho(\gamma(t))$ and
$$\frac{d^2}{dt^2}\rho(\gamma(t)) = \frac{d}{dt}\left[-\left(1-\frac{\|\gamma(t)-K(\gamma(t))\|^2}{4}\right)^{-1/2}\frac{\langle K(\gamma(t)),\gamma'(t)\rangle}{\|\gamma(t)-K(\gamma(t))\|}\right].$$

Consider the functions
$$p(t) = -\left(1-\frac{\|\gamma(t)-K(\gamma(t))\|^2}{4}\right)^{-1/2}, \qquad q(t) = \frac{\langle K(\gamma(t)),\gamma'(t)\rangle}{\|\gamma(t)-K(\gamma(t))\|},$$

\noindent
so that $\frac{d}{dt}\rho(\gamma(t)) = p(t)q(t)$. Then
\begin{align*}
\frac{d}{dt}p(t) & = \frac{1}{2}\left(1-\frac{\|\gamma(t)-K(\gamma(t))\|^2}{4}\right)^{-3/2}\left(-\frac{1}{2}\langle \gamma(t)-K(\gamma(t)),\gamma'(t)-DK_{\gamma(t)}\gamma'(t)\rangle\right)\\
& = -\frac{1}{4}\left( 1-\frac{\|\gamma(t)-K(\gamma(t))\|^2}{4}\right)^{-3/2}\langle -K(\gamma(t)),\gamma'(t)\rangle,
\end{align*}

\noindent
where, again, we have used \eqref{shortcut}. Hence
\begin{equation}\label{funcp}
\frac{d}{dt}\Big|_{t=0}p(t) = \frac{1}{4}\left(1-\frac{\|x-K(x)\|^2}{4}\right)^{-3/2}\langle K(x),\dot{x}\rangle.\end{equation}

\noindent
Now
\begin{align*}
\frac{d}{dt}q(t) & = \frac{\left(\frac{d}{dt}\langle K(\gamma(t)),\gamma'(t)\rangle\right)\|\gamma(t)-K(\gamma(t))\|}{\|\gamma(t)-K(\gamma(t))\|^2} \\
 & - \frac{\langle K(\gamma(t)),\gamma'(t)\rangle\frac{\langle \gamma(t)-K(\gamma(t)),\gamma'(t)-DK_{\gamma(t)}\gamma'(t)\rangle}{\|\gamma(t)-K(\gamma(t))\|}}{\|\gamma(t)-K(\gamma(t))\|^2}\\
 & = \frac{\left(\langle DK_{\gamma(t)}\gamma'(t)\rangle+\langle K(\gamma(t)),\gamma''(t)\rangle\right)\|\gamma(t)-K(\gamma(t))\|^2+\langle K(\gamma(t)),\gamma'(t)\rangle^2}{\|\gamma(t)-K(\gamma(t))\|^3}.
\end{align*}

\noindent
This yields
\begin{equation}\label{funcq}
\frac{d}{dt}\Big|_{t=0}q(t) = \frac{\left(\langle DK_x\dot{x},\dot{x}\rangle+\langle K(x),\ddot{x}\rangle\right)\|x-K(x)\|^2+\langle K(x),\dot{x}\rangle^2}{\|x-K(x)\|^3}.\end{equation}

\noindent
Using \eqref{funcp} and \eqref{funcq},
\begin{align*}
\frac{d^2}{dt^2}\Big|_{t=0}\rho(\gamma(t)) & = q(0)\frac{d}{dt}\Big|_{t=0}p(t)+p(0)\frac{d}{dt}\Big|_{t=0}q(t)\\
& = \frac{\langle K(x),\dot{x}\rangle}{\|x-K(x)\|}\frac{1}{4}\left(1-\frac{\|x-K(x)\|^2}{4}\right)^{-3/2}\langle K(x),\dot{x}\rangle\\
& - \left(1-\frac{\|x-K(x)\|^2}{4}\right)^{-1/2}\frac{\left(\langle DK_x\dot{x},\dot{x}\rangle+\langle K(x),\ddot{x}\rangle\right)\|x-K(x)\|^2+\langle K(x),\dot{x}\rangle^2}{\|x-K(x)\|^3}\\
& = \frac{1}{\|x-K(x)\|}\left(1-\frac{\|x-K(x)\|^2}{4}\right)^{-1/2} \cdot \\
& \left[\frac{1}{4}\langle K(x),\dot{x}\rangle^2\left(1-\frac{\|x-K(x)\|^2}{4}\right)^{-1}\right.\\
& \left.- \left(\langle DK_x\dot{x},\dot{x}\rangle + \langle K(x),\ddot{x}\rangle + \frac{\langle K(x),\dot{x}\rangle^2}{\|x-K(x)\|^2}\right)\right].
\end{align*}

\noindent
Finally, we use the fact that $\gamma$ is a geodesic w.r.t. the original metric on the sphere and, by \eqref{geodsphere},
$$\langle K(x),\ddot{x}\rangle = \langle K(x),-\|\dot{x}\|^2 x\rangle = -\|\dot{x}\|^2\langle K(x),x\rangle.$$

\noindent
Putting all these computations together,

\begin{lemma}\label{secondderlemma}For every $x \in \U\subseteq \S^n$ and $\dot{x} \in T_x\U$,
\begin{align*}\label{secondder}
D^2\rho_x(\dot{x},\dot{x}) & = \frac{1}{\|x-K(x)\|}\left(1-\frac{\|x-K(x)\|^2}{4}\right)^{-1/2}\cdot\\
& \left[\frac{1}{4}\langle K(x),\dot{x}\rangle^2\left(1-\frac{\|x-K(x)\|^2}{4}\right)^{-1}\right.\\
&\left.-\left(\langle DK_x\dot{x},\dot{x}\rangle - \|\dot{x}\|^2\langle K(x),x\rangle + \frac{\langle K(x),\dot{x}\rangle^2}{\|x-K(x)\|^2}\right)\right].
\end{align*}
\end{lemma}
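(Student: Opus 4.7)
The plan is to reduce the claim to a direct computation of $\frac{d^2}{dt^2}\big|_{t=0}\rho(\gamma(t))$ along a geodesic $\gamma$ of $\S^n$ (with respect to the round metric) through $x$ with $\gamma'(0)=\dot{x}$. From the derivation preceding Lemma \ref{firstderlemma}, the first derivative $\frac{d}{dt}\rho(\gamma(t))$ already factors as a product $p(t)q(t)$, where $p(t)$ depends only on $\|\gamma(t)-K(\gamma(t))\|$ and $q(t)$ involves $\langle K(\gamma(t)),\gamma'(t)\rangle$. I will differentiate each factor separately and combine via the product rule.

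First I would compute $p'(t)$. The chain rule forces me to differentiate $\|\gamma(t)-K(\gamma(t))\|^2$, producing the term $\langle \gamma(t)-K(\gamma(t)), \gamma'(t)-DK_{\gamma(t)}\gamma'(t)\rangle$. Rather than expanding the differential $DK$, I would invoke the shortcut \eqref{shortcut} from Remark \ref{shortcutremark} to collapse this expression to $\langle -K(\gamma(t)), \gamma'(t)\rangle$. This yields a clean expression for $p'(0)$ in which $DK$ never appears explicitly.

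Next I would differentiate $q(t)$ by the quotient rule. Its numerator, via the product rule, produces $\langle DK_{\gamma(t)}\gamma'(t),\gamma'(t)\rangle+\langle K(\gamma(t)),\gamma''(t)\rangle$, and the derivative of its denominator is again handled by \eqref{shortcut}. At $t=0$ I would then use the ambient geodesic equation for $\S^n$, namely $\gamma''(0)=-\|\dot{x}\|^2 x$ from \eqref{geodsphere}, to rewrite $\langle K(x),\gamma''(0)\rangle$ as $-\|\dot{x}\|^2\langle K(x),x\rangle$, matching the sign and structure in the claimed formula.

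The final step is to assemble $p(0)q'(0)+p'(0)q(0)$, factor out the common prefactor $\frac{1}{\|x-K(x)\|}(1-\|x-K(x)\|^2/4)^{-1/2}$, and verify that the remainder reproduces the bracketed expression in the statement. I expect the only obstacle to be bookkeeping: tracking the different fractional powers of $1-\|x-K(x)\|^2/4$, the signs introduced by the minus in $p$, and distinguishing which occurrences of $\langle K(x),\dot{x}\rangle^2$ come from $p'(0)q(0)$ versus from the $\|\gamma(t)-K(\gamma(t))\|$ factor inside $q'(0)$. There is no new geometric content beyond the orthogonality provided by Lemma \ref{xminuskx} (through its corollary \eqref{shortcut}) and the sphere's geodesic equation.
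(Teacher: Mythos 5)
Your proposal follows exactly the paper's own computation: write $\frac{d}{dt}\rho(\gamma(t)) = p(t)q(t)$, differentiate $p$ and $q$ separately using the shortcut \eqref{shortcut} to handle derivatives of $\|\gamma(t)-K(\gamma(t))\|$, apply the quotient rule to $q$ (where $\langle DK_x\dot{x},\dot{x}\rangle$ and $\langle K(x),\gamma''(0)\rangle$ appear), substitute $\gamma''(0)=-\|\dot{x}\|^2x$ from \eqref{geodsphere}, and assemble $p(0)q'(0)+p'(0)q(0)$ with the common prefactor factored out. This is correct and is essentially the same argument as in the paper.
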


\begin{lemma}\label{lemita} For every $x \in \U\subseteq \S^n$ and $\dot{x} \in T_x\U$ we have that $\langle DK_x\dot{x},\dot{x}\rangle \geq 0$.\end{lemma}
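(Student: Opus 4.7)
The plan is to derive $\langle DK_x\dot x,\dot x\rangle \geq 0$ from two successive uses of the defining property of the nearest-point map $K$. Pick any $\mathcal{C}^2$ curve $c\colon(-\varepsilon,\varepsilon)\to\U$ with $c(0)=x$ and $c'(0)=\dot x$; set $K_t:=K(c(t))$, $K'_t:=\tfrac{d}{dt}K_t = DK_{c(t)}c'(t) \in T_{K_t}\N$, and $K''_t:=\tfrac{d^2}{dt^2}K_t$, so that $K'_0 = DK_x\dot x$. By Lemma~\ref{xminuskx}, $\langle c(t)-K_t,\,K'_t\rangle = 0$ for every $t$.

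First I would differentiate this orthogonality once and evaluate at $t=0$, obtaining the identity
\[
\langle \dot x,\,DK_x\dot x\rangle \;=\; \|DK_x\dot x\|^2 \;-\; \bigl\langle x-K(x),\,K''_0\bigr\rangle,
\]
so the lemma reduces to the upper bound $\langle x-K(x),\,K''_0\rangle \leq \|DK_x\dot x\|^2$. Note that the right-hand side is in fact independent of $c''(0)$: by the chain rule $K''_0 = D^2K_x(\dot x,\dot x) + DK_x c''(0)$, and the second summand lies in $T_{K(x)}\N$, so it is annihilated by $x-K(x)$ thanks once more to Lemma~\ref{xminuskx}.

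For the bound itself I would use an envelope-type argument. Fix $t$ near $0$ and set $g(s):=\|c(t)-K(c(s))\|^2$. On the sphere the chordal distance is a strictly increasing function of the spherical distance, so the unique spherical nearest point $K(c(t))$ is also a strict local chordal minimizer among points of $\N$ close to it; combined with continuity of $K$ on $\U$, which keeps $K(c(s))$ near $K(c(t))$ for small $|s-t|$, this forces $g(s) \geq g(t)$ locally, and hence $g''(t) \geq 0$. A direct differentiation (simplified again by $\langle c(t)-K_t,K'_t\rangle=0$) gives
\[
\tfrac{1}{2}g''(t) \;=\; \|K'_t\|^2 \,-\, \bigl\langle c(t)-K_t,\,K''_t\bigr\rangle,
\]
and at $t=0$ this is exactly the required inequality. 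Substituting into the first identity then concludes $\langle DK_x\dot x,\dot x\rangle \geq 0$.

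The only delicate point I anticipate is the rigorous justification of the local-minimum claim for $g$, where one must verify both that $c(s)\in\U$ and that the chordal/spherical equivalence of nearest points applies uniformly for $s$ close to $t$; both are immediate from openness of $\U$ together with continuity of $K$. Beyond that the proof is a short consequence of the two derivative computations and is parallel in spirit to the Euclidean counterpart.
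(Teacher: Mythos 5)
Your underlying idea is the same variational one the paper uses---compare the true nearest point with a competitor point of $\N$, pass from spherical to chordal distance via monotonicity of $\arcsin$, and remove the unwanted term with Lemma \ref{xminuskx}---but the execution has a genuine gap: every key step requires the second derivative of $t\mapsto K(c(t))$. Both your differentiated orthogonality identity (which introduces $K''_0$ and, in your side remark, $D^2K_x$) and the inequality $g''(t)\geq 0$ presuppose that $K\circ c$ is twice differentiable. Under the hypotheses of the paper $\N$ is only a $\mathcal{C}^2$ submanifold, and the regularity result quoted before Lemma \ref{xminuskx} (\cite[Proposition 9]{beltran09}) gives exactly $\rho\in\mathcal{C}^2$ and $K\in\mathcal{C}^1$ on $\U$, no more; so $K''_t$ and $D^2K_x$ need not exist, and $g$, being only $\mathcal{C}^1$, need not admit the second-derivative test at its minimum. (The point you flag as delicate---the local minimality of $g$---is in fact the harmless part: on $\U$ the nearest point is the \emph{global} minimizer of the chordal distance over all of $\N$, so $g(s)\geq g(t)$ whenever $c(s)\in\U$, with no locality or continuity-of-$K$ argument needed.)

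The paper's proof avoids this by staying at first order: it writes $\tilde{x}=c(h)=x+h\dot{x}+o(h)$ and $K(\tilde{x})=K(x)+hDK_x\dot{x}+o(h)$, uses the minimizing property once in the form $\|\tilde{x}-K(\tilde{x})\|\leq\|\tilde{x}-K(x)\|$, expands, and extracts $\langle\dot{x},DK_x\dot{x}\rangle\geq 0$ in the limit with the help of Lemma \ref{xminuskx}. If you want to repair your argument using only $K\in\mathcal{C}^1$, apply the minimizing property twice, at parameters $s$ and $t$: from $\|c(t)-K(c(t))\|^2\leq\|c(t)-K(c(s))\|^2$ and $\|c(s)-K(c(s))\|^2\leq\|c(s)-K(c(t))\|^2$, adding and cancelling the squared norms yields $\langle c(t)-c(s),\,K(c(t))-K(c(s))\rangle\geq 0$; dividing by $(t-s)^2$ and letting $s\to t$ gives the lemma with only one derivative of $K$, in the spirit of monotonicity of the nearest-point projection.
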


\begin{proof} Let $c:I \rightarrow \U$ be a curve with $c(0) = x$ and $c'(0) = \dot{x}$. Let $h > 0$ be a positive real number. We will denote by $o(h)$ a generic function satisfying
$$\lim_{h \rightarrow 0}\frac{o(h)}{h} = 0.$$

Applying Taylor's Theorem, we define
$$\tilde{x} = c(h) = c(0) + h c'(0) + o(h) = x + h\dot{x} + o(h).$$

\noindent
We have that
$$K(\tilde{x}) = K(x+h x + o(h)) = K(x) + h DK_x\dot{x}+o(h).$$

\noindent
Now $K(\tilde{x})$ minimizes the distance from $\tilde{x}$ to $N$, so
$$d_{\S^n}(\tilde{x},K(\tilde{x})) \leq d_{\S^n}(\tilde{x},K(x))$$

\noindent
and because $\arcsin$ is an increasing function,
$$\|\tilde{x}-K(\tilde{x})\|^2 \leq \|\tilde{x}-K(x)\|^2.$$

\noindent
Let us compute the quantity on the left.
\begin{align*}
\|\tilde{x}-K(\tilde{x})\|^2 & = \langle \tilde{x}-K(x),\tilde{x}-K(x)\rangle-2\langle \tilde{x}-K(x),h DK_x\dot{x}\rangle + o(h)\\
& = \|\tilde{x}-K(x)\|^2-2\langle \tilde{x}-K(x),h DK_x\dot{x}\rangle + o(h).
\end{align*}

\noindent
Then, necessarily, $2\langle \tilde{x}-K(x),h DK_x\dot{x}\rangle+o(h) \geq 0$. Dividing by $2h$ and as $h$ tends to $0$, $\langle \tilde{x}-K(x),DK_x\dot{x}\rangle \geq 0$. But this quantity is
\begin{align*}
\langle \tilde{x}-K(x),DK_x\dot{x}\rangle & = \langle x+h \dot{x}-K(x)+o(h),DK_x\dot{x}\rangle \\
& = \langle x-K(x),DK_x\dot{x}\rangle + h\langle \dot{x},DK_x\dot{x}\rangle + o(h)\\
& = h \langle \dot{x},DK_x\dot{x}\rangle + o(h),
\end{align*}

\noindent
where the last equality follows from Lemma \ref{xminuskx}. Again, dividing by $h$ and as $h$ tends to $0$, the statement follows.\end{proof}

Now let us compute the operator norm of $D\rho_x$.

\begin{lemma}\label{op_norm} For every $x \in \U \subseteq \mathbb{S}^n$, we have $\|D\rho_x\|^2 = 1$.\end{lemma}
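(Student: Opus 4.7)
The plan is to interpret $\|D\rho_x\|$ as the supremum of $|D\rho_x\dot{x}|$ over unit vectors $\dot{x} \in T_x\mathbb{S}^n$, plug in the formula from Lemma \ref{firstderlemma}, and simplify using that $x$ and $K(x)$ are unit vectors of $\mathbb{R}^{n+1}$.

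First I would observe that for $\dot{x} \in T_x\mathbb{S}^n = x^{\perp} \subseteq \mathbb{R}^{n+1}$, the Euclidean inner product $\langle K(x),\dot{x}\rangle$ only sees the tangential component $K(x)^{\perp} := K(x) - \langle x,K(x)\rangle x \in T_x\mathbb{S}^n$, since the normal component contributes nothing against $\dot{x}$. By Cauchy--Schwarz the maximum of $|\langle K(x),\dot{x}\rangle|$ over unit $\dot{x} \in T_x\mathbb{S}^n$ is exactly $\|K(x)^{\perp}\|$, so Lemma \ref{firstderlemma} gives
$$\|D\rho_x\|^2 = \left(1-\frac{\|x-K(x)\|^2}{4}\right)^{-1}\frac{\|K(x)^{\perp}\|^2}{\|x-K(x)\|^2}.$$

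Next I would set $c := \langle x,K(x)\rangle$ and record the three elementary identities
$$\|x-K(x)\|^2 = 2(1-c), \qquad \|K(x)^{\perp}\|^2 = 1-c^2, \qquad 1-\frac{\|x-K(x)\|^2}{4} = \frac{1+c}{2},$$
all immediate from $\|x\|=\|K(x)\|=1$. Substituting into the previous display, the factors of $1\pm c$ telescope and leave $\|D\rho_x\|^2 = 1$.

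The only subtlety worth flagging is the bookkeeping that identifies the ambient Euclidean inner product appearing in Lemma \ref{firstderlemma} with the intrinsic metric on $T_x\mathbb{S}^n$; this is automatic since $\mathbb{S}^n \hookrightarrow \mathbb{R}^{n+1}$ is an isometric embedding and both inner products agree on tangent vectors. Conceptually, one could skip the computation entirely and invoke the classical eikonal identity $|\nabla d(\cdot,\N)| \equiv 1$ on the set where the distance to a submanifold is smooth: the distance function grows at unit speed along the minimizing geodesic from $K(x)$ to $x$, and by the triangle inequality it cannot grow faster than unit speed along any other unit-speed curve through $x$. Either route leads to the stated identity.
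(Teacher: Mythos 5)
Your proof is correct and follows essentially the same route as the paper: apply Lemma \ref{firstderlemma}, maximize $\langle K(x),\dot{x}\rangle$ over unit tangent vectors by projecting $K(x)$ onto $T_x\mathbb{S}^n$, and simplify; in fact, your substitution $c=\langle x,K(x)\rangle$ cleanly carries out the final algebra that the paper dismisses as ``elementary (yet, tedious).'' The only nitpick is notational: you label the tangential component $K(x)^{\perp}$, whereas $K(x)^{\top}$ (as in the paper) would be the natural symbol, and your closing eikonal remark is a nice conceptual alternative but is not needed.
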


\begin{proof} Let $\dot{x}\in T_x\U$ be a tangent vector with $\|\dot{x}\| = 1$. Then
$$(D\rho_x\dot{x})^2 = \left(1-\frac{\|x-K(x)\|^2}{4}\right)^{-1}\frac{\langle K(x),\dot{x}\rangle^2}{\|x-K(x)\|^2}.$$

\noindent
This quantity is maximized whenever $\langle K(x),\dot{x}\rangle^2$ does, that is, when $\dot{x}$ is the normalized projection of $K(x)$ on the tangent space $T_x\U$. In other words, we have to compute the tangential component of the vector $K(x)$ on the space $T_x\U$. We have that $x \perp T_x\U$ and $\|x\|=1$, so
$$K(x)^\top = K(x)-K(x)^\perp = K(x)-\langle K(x),x\rangle x.$$

\noindent
Then
\begin{align*}
\|K(x)^\top\|^2 & = \langle K(x)-\langle K(x),x\rangle x,K(x)-\langle K(x),x\rangle x\rangle\\
& = \|K(x)\|^2-2\langle K(x),x\rangle\langle K(x),x\rangle + \langle K(x),x\rangle^2\|x\|^2\\
& = 1-\langle K(x),x\rangle^2.
\end{align*}

\noindent
Hence the unitary tangent vector which maximizes $D\rho_x$ is
$$\dot{x} = \frac{K(x)-\langle K(x),x\rangle x}{(1-\langle K(x),x\rangle^2)^{1/2}}$$

\noindent
and an elementary (yet, tedious) computation shows that $(D\rho(x)\dot{x})^2 = 1$.
\end{proof}

\section{Proof of Theorem \ref{sn}}

Finally we prove the main result in this paper.

\begin{proof}[Proof of Theorem \ref{sn}] According to Proposition \ref{desig}, the smooth self-convexity property is equivalent to
\begin{equation}\label{pteq1}
\|\dot{x}\|^2\|D\rho_x\|^2-(D\rho_x\dot{x})^2-\rho(x)D^2\rho_x(\dot{x},\dot{x}) \geq 0
\end{equation}

\noindent
for every $x\in\U$ and $\dot{x}\in T_x\U$. In lemmas \ref{firstderlemma}, \ref{secondderlemma} and \ref{op_norm} we saw that, if $\M = \S^n$ and $\N$ is any complete $\mathcal{C}^2$ submanifold, then
$$D\rho_x\dot{x} = -\left(1-\frac{\|x-K(x)\|^2}{4}\right)^{-1/2}\frac{\langle K(x),\dot{x}\rangle}{\|x-K(x)\|},$$

\begin{align}\label{pteq2}
D^2\rho_x(\dot{x},\dot{x}) & = \frac{1}{\|x-K(x)\|}\left(1-\frac{\|x-K(x)\|^2}{4}\right)^{-1/2}\cdot\nonumber\\
& \left[\frac{1}{4}\langle K(x),\dot{x}\rangle^2\left(1-\frac{\|x-K(x)\|^2}{4}\right)^{-1}\right.\\
&\left.-\left(\langle DK_x\dot{x},\dot{x}\rangle - \|\dot{x}\|^2\langle K(x),x\rangle + \frac{\langle K(x),\dot{x}\rangle^2}{\|x-K(x)\|^2}\right)\right]
\end{align}

\noindent
and $\|D\rho_x\| = 1$. Fix $x \in \U$ and $\dot{x}\in T_x\U$. If we consider the condition metric for $\S^n$ with $\N$ being a single point, $K(x)$, then the right hand side of \eqref{pteq2} remains equal except for that $\langle DK_x\dot{x},\dot{x}\rangle = 0$ because in this case $K$ is a constant map. In Lemma \ref{lemita} we proved that for $\N$ an arbitrary $\mathcal{C}^2$ submanifold, $\langle DK_x\dot{x},\dot{x}\rangle \geq 0$. Hence the left hand side of \eqref{pteq1} for $\N$ an arbitrary $\mathcal{C}^2$ submanifold is bounded below by the corresponding left hand side for $\N = \{K(x)\}$, and the latter is greater or equal than $0$ by Proposition \ref{punctured_sn_smooth}.
 \end{proof}

\section{Punctured $\H^n$}

In this last section we give a counterexample showing that the smooth self-convexity property does not hold when $\M = \H^n$, the hyperbolic space, and $\N$ is a single point. First note that is enough to give a counterexample for $\mathbb{H}^2$. Indeed, consider the disk model for this punctured $\mathbb{H}^n$, $\M = \mathbb{D}^n \setminus \{0\} = \{x \in \mathbb{R}^n\ |\ \|x\|^2 < 1\}\setminus\{0\}$ together with the condition metric given by the (hyperbolic) distance to the origin $\N = \{0\}$. Then the punctured $\mathbb{H}^2$, $\M_2 = \mathbb{D}^2\setminus \{0\}$, can be viewed as a $2$-dimensional submanifold of $\M$. Now, since there is an isometry of $\M$ that fixes every point in $\M_2$, every geodesic segment in $\M_2$ such that its \eqref{log_conv} function is not convex is a geodesic segment in $\M$ such that its \eqref{log_conv} function is not convex. Some geodesic segments in the punctured disk model for $\mathbb{H}^2$ are represented in Figure \ref{fig:disk}. As we can see, its corresponding \eqref{log_conv} functions are not convex.

\begin{figure}[h]
\begin{subfigure}{.45\textwidth}
  \centering
  \includegraphics[width=1\textwidth]{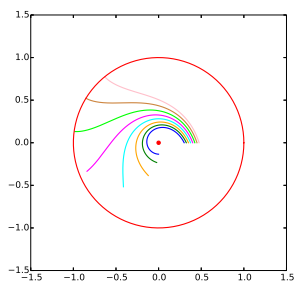}
	\caption{Geodesic segments.}
\end{subfigure}
\begin{subfigure}{.45\textwidth}
  \centering
  \includegraphics[width=1.1\textwidth]{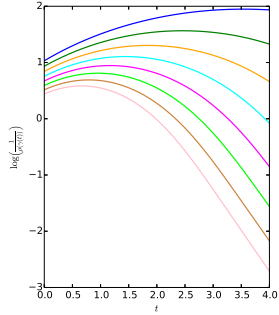}
	\caption{Corresponding \eqref{log_conv} functions.}
\end{subfigure}
\caption{Some geodesic segments in the condition metric when $\M$ is the disk model of the hyperbolic plane $\mathbb{H}^2$ and $\N$ is the red point, $(0,0)$. Clearly the self-convexity property is not satisfied in this case.}\label{fig:disk}
\end{figure}

\begin{proof}[Proof of Theorem \ref{hn}] Let $\mathbb{D}^2$ be the Poincar\'e disk model for the hyperbolic space, $\mathbb{D}^2 = \{z\in\mathbb{C}\ |\ |z|<1\}$. We take polar coordinates $(r,\phi) \mapsto r e^{i\phi}$ with $r \in (0,1)$ and $\phi\in(-\pi,\pi)$. Then the local expression for the metric tensor is
$$g_{(r,\phi)} = \begin{pmatrix} \frac{1}{(1-r)^2} & 0\\
0 & \frac{r^2}{(1-r)^2}\end{pmatrix}.$$

If we take $\N = \{(0,0)\}$, then the (hyperbolic) distance from a point $(r,\phi)$ to $\N$ is $\rho(r,\phi) = -\log(1-r)$. If $(\dot{r},\dot{\phi})$ is a tangent vector at the point $(r,\phi)$, then its norm is given by
\begin{equation}\label{disk_norm}
\|(\dot{r},\dot{\phi})\|^2_{(r,\phi)} = \frac{\dot{r}^2+\dot{\phi}^2r^2}{(1-r)^2}.
\end{equation}

Now let us compute the Christoffel symbols for the Pincar\'e disk. We have that
$$\frac{\partial g_{11}}{\partial_r}=\frac{2}{(1-r)^3} \qquad \frac{\partial g_{22}}{\partial r} = \frac{2r}{(1-r)^3},$$

\noindent
and the rest of the derivatives are zero. The Christoffel symbols are
$$\Gamma_{11}^1 = \frac{1}{1-r}, \qquad \Gamma_{12}^1 = 0, \qquad \Gamma_{22}^1 = -\frac{r}{1-r},$$
$$\Gamma_{11}^2 = 0, \qquad \Gamma_{12}^2 = \frac{1}{r(1-r)}, \qquad \Gamma_{22}^2 = 0.$$

\noindent
With the Christoffel symbols we obtain the geodesic equations
\begin{equation}\label{geod_disk}
\left\{\begin{array}{rl}
\ddot{r} + \frac{\dot{r}^2}{1-r} - \frac{r\dot{\phi}^2}{1-r} & = 0\\
\ddot{\phi} + \frac{2\dot{r}\dot{\phi}}{r(1-r)} = 0
\end{array}
\right.
\end{equation}

Now let us compute the derivatives of the distance function $\rho$. Let $(r,\phi)$ be a point and $(\dot{r},\dot{\phi})$ a tangent vector. Let $c(t) = (c_1(t),c_2(t))$ be a curve with $c(0) = (r,\phi)$ and $c'(0) = (\dot{r},\dot{\phi})$. We have that
$$\frac{d}{dt}\rho(c(t)) = \frac{d}{dt}[-\log(1-c_1(t))] = \frac{c_1'(t)}{1-c_1(t)}.$$

\noindent
Hence,
\begin{equation}\label{first_der_disk}
D\rho_{(r,\phi)}(\dot{r},\dot{\phi}) = \frac{\dot{r}}{1-r}.
\end{equation}

Now let $\gamma(t) = (\gamma_1(t),\gamma_2(t))$ be a geodesic (w.r.t. the original hyperbolic metric) with $\gamma(0) = (r,\phi)$ and $\gamma'(0) = (\dot{r},\dot{\phi})$. Then,
$$\frac{d^2}{dt^2}\rho(\gamma(t)) = \frac{d}{dt}\frac{\gamma_1'(t)}{1-\gamma_1(t)} = \frac{\gamma_1''(t)(1-\gamma_1(t))+\gamma_1'(t)^2}{(1-\gamma_1(t))^2}.$$

\noindent
Therefore,
\begin{equation}\label{second_der_disk}
D^2\rho_{(r,\phi)}((\dot{r},\dot{\phi}),(\dot{r},\dot{\phi})) = \frac{\ddot{r}(1-r)+\dot{r}^2}{(1-r)^2} = \frac{r\dot{\phi}^2}{(1-r)^2},
\end{equation}

\noindent
where we have replaced $\ddot{r}$ by its value in terms of $\dot{r}$ and $\dot{\phi}$ using the geodesic equations \eqref{geod_disk}. Let us compute the operator norm of $D\rho_{(r,\phi)}$. The quantity in \eqref{first_der_disk} is maximized when $\dot{r}$ is as large as possible. Let us consider the tangent vector $(1,0)$, whose norm is $\frac{1}{1-r}$. Then $(1-r,0)$ is a unitary vector that maximizes $D\rho_{(r,\phi)}$. Hence,
\begin{equation}\label{op_norm_disk}
\|D\rho_{(r,\phi)}\| = D\rho_{(r,\phi)}(1-r,0) = 1.
\end{equation}

\noindent
Finally, let us compute quantity in Proposition \ref{desig} using \eqref{disk_norm}, \eqref{first_der_disk}, \eqref{second_der_disk} and \eqref{op_norm_disk}.
\begin{align*}
\|(\dot{r},\dot{\phi})\|^2\|D\rho_{(r,\phi)}\|^2 - (D\rho_{(r,\phi)}(\dot{r},\dot{\phi}))^2 &\\
-\rho(r,\phi)D^2\rho_{(r,\phi)}((\dot{r},\dot{\phi}),(\dot{r},\dot{\phi})) & = \frac{\dot{\phi}^2r(r+\log(1-r))}{(1-r)^2}.
\end{align*}

\noindent
Since the real function $r \mapsto r+\log(1-r) < 0$ for every $r \in (0,1)$, the quantity above is zero if and only if $\dot{\phi} = 0$ ($(\dot{r},\dot{\phi})$ points towards the origin) and otherwise is negative. Proposition \ref{desig} finishes the proof.
\end{proof}

\end{document}